\newcommand{\cur}{\mathrm{curl}\,}
\newcommand{\id}{\mathrm{d}}
\newcommand{\dx}{\ \mathrm{d}x}
\newcommand{\ds}{\ \mathrm{d}s}
\newcommand{\J}{\mathcal{J}}
\newcommand{\mH}{\mathcal{H}}
\newcommand{\mL}{\mathcal{L}}
\newcommand{\E}{\mathcal{E}}
\newcommand{\T}{\mathcal{T}}
\newcommand{\R}{\mathbb{R}}
\newcommand{\I}{\mathcal{I}}
\newcommand{\V}{\mathcal{V}}
\newcommand{\C}{\mathcal{C}}
\newcommand{\M}{\mathcal{M}}
\newcommand{\boldu}{\boldsymbol{u}}
\newcommand{\boldp}{\boldsymbol{p}}
\newcommand{\boldv}{\boldsymbol{v}}
\newcommand{\boldl}{\boldsymbol{\lambda}}
\newcommand{\boldm}{\boldsymbol{\mu}}
\newcommand{\bolde}{\boldsymbol{\eta}}
\theoremstyle{plain}
\newtheorem{lemma}{Lemma}
\theoremstyle{definition}
\newtheorem{definition}{Definition}
\theoremstyle{remark}
\newtheorem{remark}{Remark}
\newtheorem{example}{Example}
\begin{document}

\setcounter{page}{1}

\title{Multi-material topology optimization of electric machines under maximum temperature and stress constraints}
\author{Peter~Gangl$^1$         \and
        Nepomuk~Krenn$^1$ \and
Herbert~De~Gersem$^2$
}
\date{$^1$RICAM, Austrian Academy of Sciences, \\
Altenberger Stra{\ss}e 69, 4040 Linz, Austria\\
$^2$Institute for Accelerator Science and Electromagnetic Fields, TU Darmstadt, \\
Schloßgartenstr. 8, 64289 Darmstadt, Germany}

\maketitle
\begin{abstract}
The use of topology optimization methods for the design of electric machines has become increasingly popular over the past years. Due to a desired increase in power density and a recent trend to high speed machines, thermal aspects play a more and more important role. In this work, we perform multi-material topology optimization of an electric machine, where the cost function depends on both electromagnetic fields and the temperature distribution generated by electromagnetic losses. We provide the topological derivative for this coupled multi-physics problem consisting of the magnetoquasistatic approximation to Maxwell's equations and the stationary heat equation. We use it within a multi-material level set algorithm in order to maximize the machine's average torque for a fixed volume of permanent-magnet material, while keeping the temperature below a prescribed value. Finally, in order to ensure mechanical stability, we additionally enforce a bound on mechanical stresses. Numerical results for the optimization of a permanent magnet synchronous machine are presented, showing a significantly improved performance compared to the reference design while meeting temperature and stress constraints.

\end{abstract}
\section{Introduction} 
Electric machines play an important role in energy transition. Since the geometry of an electric machine has a strong impact on its performance, design optimization methods are often used in practice when the machine is designed for a particular purpose. A widely used approach is to represent the geometry by parameters like lengths, radii or angles and to optimize these parameters, either by derivative free \cite{Kim2024} or gradient-based \cite{Wiesheu2024} optimization methods. In such a setting, the space of possible designs remains limited by the chosen parametrization. In order to overcome these design limitations, over the past decade, topology optimization methods \cite{BendsoeSigmund2003a, AllaireJouveDapogny2021} have been applied to more and more optimal design problems from electromagnetics \cite{LucchiniTorchioCirimeleAlottoBettini2022} and, in particular, to the design optimization of electric machines \cite{Cherriere_2022aa, Kuci2021, AmstutzGangl2019}. 

A common quantity of interest for an electric machine is the average torque produced for a given input current. Thus, often the goal is to find an optimized material arrangement in the rotor (often consisting of ferromagnetic material, air and permanent magnets) or stator (ferromagnetic material, air, copper) that maximizes the average torque while keeping the amount of permanent magnet material low for cost reasons. Since electromagnetic design optimization may often result in geometries that are not mechanically stable at the designated rotation speed, several authors have combined it with mechanical constraints such as local stress constraints \cite{Lee_MTPAMech, Li_2023aa, Holley_Diss} or a compliance constraint \cite{GuoBrown2020}.

A current goal in electric machine design is to increase power density, i.e., to reduce volume and mass while maintaining the same electromagnetic performance. Because increased heat loss densities should be compensated by increased cooling heat flux density, thermal aspects in electric machines are becoming increasingly important. The main heat source in an electric machine are the copper losses coming from the currents in the stator coils, which can be effectively cooled from outside. The eddy-current losses in the rotor's permanent magnets, where cooling is more challenging, need a more careful consideration. Due to the quadratic scaling with the rotational speed, these losses get even more relevant in high-speed applications. There are various approaches on how to model permanent magnet eddy-current losses, see \cite{OumaraDubas_Eddy_Review}. In \cite{Yamazaki_Teeth} and \cite{Zhu_LossReduction}, the authors optimized the shape of the stator teeth, in \cite{Yamazaki_Magnets}, they considered the basic magnet topology and the shape of the air pockets inside the rotor to maximize machine efficiency considering permanent magnet eddy-current losses. With this method, it is not possible to control the temperature inside the machine directly, which is desirable, since the permanent magnets are very sensitive to heat. To compute the temperature distribution explicitly, one has to solve an additional heat equation. Here, it is crucial to choose the correct boundary conditions to incorporate cooling effects along the shaft and the air gap. Compared to the electromagnetic simulation, which includes solving a nonlinear equation for multiple rotor positions, the computational overhead for this additional equation is moderate. 

The common design optimization workflow in industry is a sequential one, meaning to first optimize electromagnetic performance and later dealing with temperature bounds, possibly sacrificing some of the gained performance. In contrast, an integrated optimization approach could account for the interaction between these fields already during the optimization. In the past years, multiphysics topology optimization has been studied by many researchers, see \cite{KambampatiGrayKim2020} for battery pack optimization under stress and temperature constraints; \cite{AmigoGiustiNovotny2016} for piezoelectric actuators; \cite{AllaireGfrerer2024} where, among other applications, fluid structure interaction is considered; or \cite{Neofytou2025} for acoustic-structure interaction. Nevertheless, so far, only few works dealing with integrated optimization accounting for the interaction of electromagnetic and thermal fields in electric machines have been reported in the literature, see e.g. \cite{Babcock_2023aa, Babcock2024} for the optimization of geometric parameters. Topology optimization in this setting seems to be entirely unexplored.

The most widely used classes of topology optimization methods are density-based methods \cite{BendsoeSigmund2003a} employing a spatially continuous design variable that smoothly interpolates between the materials of interest on the one hand, and level-set based methods \cite{AllaireJouveDapogny2021} on the other hand. In the latter class of methods, the design is represented by the values of a continuous level set function where no material interpolation is needed if material interfaces are resolved by the discretization. Level set based methods can be driven by either shape derivatives \cite{OsherSethian_1988a, AllaireJouveToader2004} or topological sensitivity information \cite{Amstutz_Levelset, YamadaIzuiNishiwakiTakezawa2010}. All of these classes of methods have been extended to the case of multiple materials, see \cite{Sigmund2001,Cherriere_2022aa} for density-based methods, or \cite{AllaireDapognyDelgadoMichailidis2014, Laurain2024} and \cite{NodaNoguchiYamada2022, Gangl_Multi} for level set methods based on shape or topological sensitivities, respectively. In this work, we will employ the multi-material level set method based on topological derivatives \cite{Gangl_Multi} which by construction does not feature large regions of intermediate materials and can deal with topology changes in a flexible way.

 Topological derivatives represent the sensitivity of a shape function with respect to local material perturbations. There exist different methods for obtaining topological derivatives of optimization problems constrained by linear or semilinear partial differential equations (PDEs), e.g. the topological-shape sensitivity method \cite{Novotny2013}, or the approaches presented in \cite{Amstutz_Sensitivity} or \cite{Sturm_2020}. While in this setting, fundamental solution techniques can be used to obtain closed-form formulas of topological derivatives, this is no longer possible in the case of quasilinear PDE constraints. Then, however, the topological derivative can be obtained in dependence of a corrector term appearing in the asymptotics of the perturbed PDE, which can be numerically computed in an offline phase; see \cite{AmstutzGangl2019, Gangl_Simplified, Gangl_Automated}. 
 
 In practice, one is often interested in optimized designs which satisfy local state constraints such as local temperature or mechanical stress constraints. Such constraints can be treated by quadratic penalty functionals as recently presented in \cite{AndradeNovotnyLaurain2024}, or penalty functionals based on $p$-norms as presented in \cite{Amstutz_Constraint, Amstutz_vonMises} in the case of constraints on derivatives of the state variable. 

 The rest of the manuscript is organized as follows: Section \ref{sec_modelproblem} introduces the model problem considered throughout this work. The concept of topological derivatives and the considered multi-material level set algorithm are introduced in Section \ref{sec_TDmultmat}. We then present how to compute the topological derivative of a purely electromagnetic problem in Section \ref{sec:T} before going over to the electromagnetic-thermal coupled problem in Section \ref{sec:TCt}. In Section \ref{sec_mechanics} we add the aspect of mechanical stresses before presenting numerical results in Section \ref{sec_numerics}.


    

\subsection*{Notation}
Let us briefly collect some notation used in this work. For open domains $D\subset\R^2$ we denote by $L_2(D), H^1(D)$ the spaces of square integrable functions and functions with square integrable first weak derivatives. By $B_\epsilon(z):=\{x\in\R^2:\|x-z\|<\epsilon\}$ we denote the open ball with center $z$ and radius $\epsilon$. The function $\chi_A\in L_2(D)$ is the characteristic function of a set $A\subset\R^2$. We denote the two-dimensional unit vector in direction $\varphi$ by $e_\varphi:=(\cos\varphi,\sin\varphi)^T$ and the rotation matrix $R_\varphi:=(e_\varphi,e_{\varphi+\frac{\pi}{2}})$. Further, we abbreviate with $\rho_\varphi:\R^2\rightarrow\R^2,(x,y)^T\mapsto R_\varphi(x,y)^T$ a rotational coordinate transformation. There are two variants of curl operators in 2d. By $\cur(v)=(\partial_yv,-\partial_xv)^T=R_{-\frac{\pi}{2}}\nabla v$ we denote the scalar-to-vector curl, by $\widetilde{\cur}(v)=-\partial_yv_x+\partial_xv_y=\mathrm{div}(R_{-\frac{\pi}{2}}v)$ the vector-to-scalar counterpart.
\section{Model problem} \label{sec_modelproblem}
\begin{figure}
    \centering
    \includegraphics[width=0.4\textwidth]{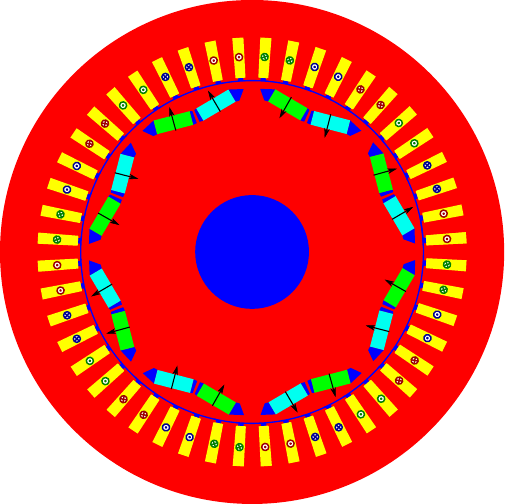}
    \includegraphics[width=0.5\textwidth]{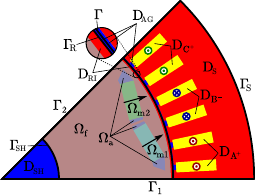}
    \caption{2d cross section of the reference machine (left), one pole of the reference machine $D_\mathrm{all}$ with radial boundaries $\Gamma_1, \Gamma_2$ consisting of rotor, stator and air gap (right). Stator consists of iron $D_S$ (red) with outer boundary $\Gamma_S$ and copper coils $D_{A^+},D_{B^-},D_{C^+}$ (yellow). The air gap $D_\mathrm{AG}$ (blue) is split by circle $\Gamma$ used for harmonic mortaring. The rotor consists of shaft $D_\mathrm{SH}$ (blue) with boundary $\Gamma_\mathrm{SH}$, design domain $D$ (greyed out) and iron ring $D_\mathrm{RI}$ (red) with outer boundary $\Gamma_R$. Design domain consists of iron $\Omega_f$ (red), air $\Omega_a$ (blue) and permanent magnets $\Omega_{m_1},\Omega_{m_2}$ (light blue, light green).}
    \label{fig:setup}
\end{figure}
\begin{table}
    \centering
    \begin{tabular}{l|l}
         Number of pole pairs $N_\mathrm{pp}$ and slots&  4, 48\\
         Inner and outer radius rotor& 26.5\,mm, 78.6\,mm  \\
         Inner and outer radius stator& 79.1\,mm, 116\,mm  \\
         Width of iron ring and air gap&1\,mm, 0.5\,mm\\
         Axial length&179\,mm\\
         Amplitude of current density $\hat{j}$, load angle $\phi_0$&$23.7\,\mathrm{A/mm^2}$, $6^\circ$\\
         Iron bh-curve parameters $\nu_f,K_f,N_f$&200\,m/H, 2.2\,T, 12\\
         Vacuum and permanent magnet reluctivity $\nu_0,\nu_m$&$10^7/(4\pi)\,\mathrm{m/H},\nu_0/1.086$\\
         Remanent flux density $B_R$& 1.216\,T \\
         Magnetization direction $\varphi_1,\varphi_2$&  $30^\circ, 15^\circ$\\
         Rotational speed $V_\mathrm{rot}$ & 16000\,$\mathrm{rpm}$\\
         Electric conductivity of magnet $\sigma_m$&$6.7\times 10^5$\,S/m\\
         Thermal conductivities $\lambda_f,\lambda_m,\lambda_a$&16, 9, 0.05\,W/(mK)\\
         Thermal heat transfer coefficients $h_{SH},h_{AG}$&0.235, 260\,$\mathrm{W/(m^2K)}$\\
         Ambient temperature $\vartheta_0$&$40^\circ$C\\
         Mass densities $\rho_f,\rho_m$&7.65, 8.4\,$\mathrm{g/cm^3}$\\
         Young's modulus of iron $E_f$&200\,GPa
    \end{tabular}
    \caption{Machine geometry and material data}
    \label{tab:data}
\end{table}
Electromagnetic phenomena are described by Maxwell's equations, see e.g. \cite{Griffiths_Electrodynamic},
\begin{align}\label{eq:Maxwell}
    \cur H=J+\partial_tD,\quad\cur E=-\partial_tB,\quad \mathrm{div}B=0,\quad\mathrm{div}D=\rho,
\end{align}
where the magnetic field $H$, the magnetic flux density $B$, the electric field $E$, the electric flux density $D$, the current density $J$ are vector-valued functions and the electric charge density $\rho$ a scalar-valued function in space and time. In low frequency applications, as electric machines, the displacement currents $\partial_t D$ can be neglected. We consider the constitutive relations
\begin{align}
    H=h(B),\quad J=J_s+\sigma E,
\end{align}
where $h$ is the magnetic material law, $J_s$ the impressed current density and $\sigma$ the electric conductivity. Since $B$ is divergence-free, there exists a vector-valued function $A$ called the magnetic vector potential with $B=\cur A$. Substitution now yields the magnetoquasistatic equation
\begin{align}
    \sigma\partial_t A+\cur h(\cur A)=J_s,
\end{align}
also called the eddy-current problem. As it is common practice in electric machine optimization, we assume our magnetic quantities to be constant in axial direction and consider only a 2d cross section as computational domain. This yields a scalar-valued problem
\begin{align}
    \sigma\partial_t u+\widetilde{\cur}h(\cur u)&=j\\
\end{align}
where $u(t,x,y)$ and $j(t,x,y)$ are the third components of the magnetic vector potential $A=(0,0,u)^T$ and the source current density $J_s=(0,0,j)^T$, respectively.
\begin{remark}
    In our two-dimensional setting, this equation has for an isotropic material $h(b)=f(|b|)b$ the same structure as a quasilinear heat equation
    \begin{align}
        \sigma\partial_tu-\mathrm{div}(f(|\nabla u|)\nabla u)=j
    \end{align}
    since $\widetilde{\cur},\cur$ are rotated versions of $\mathrm{div},\nabla$, respectively.
\end{remark}
The machine under investigation, a permanent magnet synchronous machine, is displayed in Figure \ref{fig:setup}. The outer part of the machine consists of an iron stator $D_S$ with copper coils $D_{A^+}, D_{B^-}, D_{C^+}$, where the three-phase excitation current density $j$ is applied. In the inner we have the rotor $D_R$ which rotates with the same speed as the magnetic field induced by the excitation current. It consists of a shaft $D_\mathrm{SH}$ in the middle, to which the mechanical forces are transferred, an outer iron ring $D_\mathrm{RI}$, iron $\Omega_f$, air pockets $\Omega_a$ and permanent magnets $\Omega_{m_1},\Omega_{m_2}$ with magnetization direction $\varphi_1,\varphi_2$, respectively. The latter domains will change throughout the optimization. We denote the actual material configuration by the tuple
 \begin{align}\label{eq:configuration}
    \Omega=(\Omega_f,\Omega_a,\Omega_{m_1},\Omega_{m_2}),
\end{align}
where $\Omega_f,\Omega_a,\Omega_{m_1},\Omega_{m_2}$ are open subsets of the design domain $D=D_R\setminus\overline{D_\mathrm{SH}\cup D_\mathrm{RI}}$ and form a disjoint partition of $D$. The iron ring $D_\mathrm{RI}$ is not part of the design domain due to mechanical reasons and to maintain a smooth rotor surface at the air gap. Rotor $D_R$ and stator $D_S$ are magnetically connected via a thin air gap $D_\mathrm{AG}$. The iron parts in rotor and stator are laminated, i.e., they are formed by a stack of thin isolated iron sheets. The dimensions of the machine and other characteristic numbers are presented in Table \ref{tab:data}. We are interested in steady state operation of the machine, i.e. all quantities are periodic with a period $T_\mathrm{full}>0$. A common assumption is that no magnetic flux leaves the machine, which corresponds to a homogeneous Dirichlet boundary condition for the magnetic vector potential. Due to symmetries, it is sufficient to simulate only one pole of the machine, which is, in our case, a piece of $45^\circ$, called $D_\mathrm{all}$, with anti-periodic boundary conditions on $\Gamma_1,\Gamma_2.$ Using these simplifications, the model problem reads: Find $u$ with
\begin{align}\label{eq:magnetoquasistatics}\begin{aligned}
    \sigma_{\Omega(t)}\partial_tu(t)+\widetilde{\cur}_xh_{\Omega(t)}(\cur_x u(t))&=j(t)&&\text{ in }D_\mathrm{all}\\
    u(t)&=0&&\text{ on }\Gamma_S\\
    u(t)|_{\Gamma_1}&=-u(t)|_{\Gamma_2},&&\\
    \sigma_{\Omega(0)} u(0)&=\sigma_{\Omega(T)} u(T)&&\text{ on }D_\mathrm{all}
\end{aligned}
\end{align}
for $t\in [0,T]$. The rotation of the rotor results in a time-dependent rotor geometry $\Omega(t)$ which is incorporated via the the dependence of the material laws on the actual material configuration
\begin{align}\label{eq:material_laws}\begin{aligned}
h_\Omega(b)&=h_f(b)\chi_{D_S\cup D_\mathrm{RI}\cup\Omega_f}+h_a(b)\chi_{D_{A^+}\cup D_{B^-}\cup D_{C^+}\cup D_\mathrm{AG}\cup D_\mathrm{SH}\cup\Omega_a}+h_{m_1}(b)\chi_{\Omega_{m_1}}+h_{m_2}(b)\chi_{\Omega_{m2}},\\
    h_f(b)&=\nu_0 b+(\nu_f-\nu_0)\frac{K_f}{\sqrt[N_f]{K_f^{N_f}+|b|^{N_f}}}b,\quad
    h_a(b)=\nu_0 b,\quad
    h_{m_i}(b)=\nu_m(b-B_Re_{\varphi_i}), i=1,2,\\
    \sigma_\Omega&=\sigma_m\chi_{\Omega_{m_1}\cup\Omega_{m_2}},
\end{aligned}
\end{align}
with reluctivities $\nu_0,\nu_m$, iron bh-curve parameters $\nu_f,K_f,N_f$, remanent flux density $B_R$, magnetization directions $\varphi_1,\varphi_2$ and electric conductivity $\sigma_m$ as in Table \ref{tab:data}. The conductivity in the iron parts is $0$ due to lamination. The excitation currents $j(t)$ are periodic by $T_\mathrm{full}=\tfrac{1}{N_\mathrm{pp}V_\mathrm{rot}}$, which is the periodicity of the fields in the stator. The fields in the rotor are periodic by 
\begin{align}\label{eq:periodicity}
    T=\frac{T_\mathrm{full}}{6},
\end{align}
since the rotor device is moving synchronously with the fields induced by $j$ and only sees the variations due to the geometric inhomogeneities of the stator.
Since $\sigma_\Omega$ is only non-zero in the permanent magnets, which are part of the rotor, $u$ inherits this higher periodicity which corresponds for our machine to a mechanical rotation by $15^\circ.$

\section{Multi-material level set based topology optimization by the topological derivative} \label{sec_TDmultmat}
We recall the framework from \cite{Gangl_Multi} for the multi-material design optimization of some quantity $\J(\Omega)$. We aim to distribute $M$ different materials denoted by
\begin{align}
    \Omega=(\Omega_i)_{i\in\M},\quad \M:=\{1,...,M\}
\end{align}
and denote by $z\in\Omega:\Leftrightarrow z\in\cup_{i\in\M}\Omega_i$ that a point $z\in D$ is not on the material interfaces.
\subsection{Topological perturbations}
In order to introduce the topological derivative as sensitivity information for our optimization, we need the notion of topological perturbations in the multi-material setting.
\begin{definition}
    Let $z\in\Omega_i$ for some $i\in\M$. We call $\omega_\epsilon(z):=z+\epsilon\omega$ a perturbation in point $z$ of size $\epsilon>0$ and shape $\omega\subset\R^2$, and
    \begin{align}\label{eq:TDpert}
        \Omega_\epsilon^{i\rightarrow j}(z):=(\tilde{\Omega}_k)_{k\in\M},\quad \tilde{\Omega}_k=\begin{cases}
            \Omega_i\setminus\overline{\omega_\epsilon(z)}&\text{ if }k=i\\
            \Omega_j\cup\omega_\epsilon(z)&\text{ if }k=j\\
            \Omega_k&\text{ else}
        \end{cases}
    \end{align}
    for $k,j\in\M\setminus\{i\}$ the $i$-to-$j$-topologically perturbed configuration. 
\end{definition}

In words, we fill the perturbation $\omega_\epsilon(z)$ with a different material $j\neq i$, thus removing it from $\Omega^i$ and adding it to $\Omega^j$.
This leads us to the definition of the multi-material topological derivative. First, we define the sensitivity for a fixed change from material $i$ to material $j$.
\begin{definition}
    Let $i,j\in \M, j\neq i, z\in\Omega_i$ and $\Omega \mapsto \J(\Omega)$ a real-valued shape function. The topological derivative of $\J$ in $z$ for changes from material $i$ to material $j$ is defined by
    \begin{align}\label{eq:TDdef}
        \id^{i\rightarrow j}\J(\Omega)(z)=\lim_{\epsilon\rightarrow 0}\frac{\J(\Omega_\epsilon^{i\rightarrow j}(z))-\J(\Omega)}{|\omega_\epsilon(z)|},
    \end{align}
    where $\Omega_\epsilon^{i\rightarrow j}(z)$ is the $i$-to-$j$-topologically perturbed material configuration. Since there are $M-1$ possible choices of $j$, we collect them in the vector-valued topological derivative for $z\in\Omega_i$,
    \begin{align}\label{eq:TDvector}
        \id^i\J(\Omega)(z)=\left(\id^{i\rightarrow j}\J(\Omega)(z)\right)_{j\in \M,j\neq i}\in\R^{M-1}.
    \end{align}
    Combining this for all possible $z\in\Omega$, we get the multi-material vector-valued topological derivative
    \begin{align}\label{eq:TDmulti}
        \id\J(\Omega)(z)=\sum_{i\in\M}\chi_{\Omega_i}(z)\id^i\J(\Omega)(z).
    \end{align}
\end{definition}
We call a material configuration $\Omega$ a minimizer of $\J$ with respect to topological perturbations, if for all points $z\in\Omega$, all possible material changes would lead to an increase of the functional $\J$.
\begin{definition}\label{def:TDoptimality}
A configuration $\Omega$ is called locally minimal with respect to topological perturbations if
    \begin{align}\label{eq:TDoptimality}
        \id\J(\Omega)(z)>0
    \end{align}
    for all $z\in\Omega$.
\end{definition}
\subsection{Multi-material level set description}
We use a vector-valued continuous level set function $\psi: D \rightarrow \mathbb R^{M-1}$ to describe a material configuration consisting of $M$ subdomains $\Omega_1, \dots, \Omega_M$ as it was done in \cite{Gangl_Multi}. We divide $\R^{M-1}$ into $M$ open sectors $S_i$ and use the relation
\begin{align}\label{eq:levelset}
    z\in\Omega_i\Leftrightarrow \psi(z)\in S_i
\end{align}
to assign a material to every point $z$. 
\begin{remark}
    For the commonly used two-material case (i.e. in or out), this corresponds to a scalar-valued level set function with sectors $S_1=(0,\infty),S_2=(-\infty,0)$ which leads to 
    \begin{align}
        z\in \Omega_1\Leftrightarrow\psi(z)>0, \quad z\in\Omega_2\Leftrightarrow\psi(z)<0.
    \end{align}
\end{remark}
Similarly as in \cite{Yamada_Multi}, we choose the sectors
\begin{align}
    S_i:=\{x\in\R^{M-1}:x\cdot V_i>x\cdot V_j, j\in \M\setminus\{i\}\}
\end{align}
where $V_i$ are the vertices of the $M-1$ dimensional unit simplex
\begin{align}\label{eq:Simplex}
    v_{i,n}=\begin{cases}
        -\sqrt{\frac{M}{(M-1)(M-n)(M-(n-1))}},&\quad n<i\\
        \sqrt{\frac{M(M-n)}{(M-1)(M-(n-1))}},&\quad n=i\\
        0,&\quad n>i,
    \end{cases}
\end{align}
where $v_{i,n}$ is the $n$-th entry of $V_i\in\R^{M-1}$ for $i=1,...,M.$
All sectors are congruent and share a hyperface with each other sector. Therefore, we do not introduce a bias by our material representation and enable direct changes between any two materials. A sector $S_i$ is bounded by $M-1$ half-hyperplanes with normals
\begin{align}
    n^{i\rightarrow j}=\frac{V_j-V_i}{\|V_j-V_i\|_2}.
\end{align}
This yields an alternative definition of the sectors
\begin{align}\label{eq:sectors}
    S_i=\{x\in\R^{M-1}: N_ix>0\},\quad N_i:=\left(\left(n^{j\rightarrow i}\right)^T\right)_{j\in \M,j\neq i} \in \mathbb R^{M-1 \times M-1},
\end{align}
where the row vectors of the matrix $N_i$ are all normals pointing into sector $S_i$.
\begin{example}
    In our application with $M=4$ materials, the points given by \eqref{eq:Simplex} are the vertices of a regular tetrahedron
    \begin{align}
        V_1=\begin{pmatrix}
            1\\0\\0
        \end{pmatrix}, V_2=\begin{pmatrix}
            -\tfrac{1}{3}\\\sqrt{\tfrac{8}{9}}\\0
        \end{pmatrix}, V_3=\begin{pmatrix}
            -\tfrac{1}{3}\\-\sqrt{\tfrac{2}{9}}\\\sqrt{\tfrac{4}{6}}
        \end{pmatrix}, V_4=\begin{pmatrix}
            -\tfrac{1}{3}\\-\sqrt{\tfrac{2}{9}}\\-\sqrt{\tfrac{4}{6}}
        \end{pmatrix}.
    \end{align}
\end{example}
Combining the optimality criterion from Definition \ref{def:TDoptimality} and the definition of the sectors \eqref{eq:sectors}, we can state a sufficient optimality criterion via the level set function.
\begin{lemma}\label{lem:Optimality}
    Let $\psi_{\Omega}$ be a vector-valued level set function describing the configuration $\Omega$. If
    \begin{align}\label{eq:Optimality}
        \psi_{\Omega}(z)=c\sum_{i\in\M}\chi_{\Omega_i}(z)N_i^{-1}\id^i\J(\Omega)(z)
    \end{align}
    for a positive constant $c>0$ and all $z\in\Omega$, then $\Omega$ is locally optimal with respect to topological perturbations.
\end{lemma}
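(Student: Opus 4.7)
The plan is to show that the proposed formula for $\psi_\Omega$ forces the optimality condition of Definition \ref{def:TDoptimality} at every point, by unfolding the sector description of the level set and reading off the sign of the topological derivative componentwise.

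First I would fix an arbitrary point $z\in\Omega$ and let $i\in\M$ be the unique index with $z\in\Omega_i$. By the level set assignment rule \eqref{eq:levelset}, this is equivalent to $\psi_\Omega(z)\in S_i$, and by the halfspace characterization \eqref{eq:sectors} this in turn is equivalent to the componentwise inequality $N_i\,\psi_\Omega(z)>0$ in $\R^{M-1}$. The identity to be exploited is thus that membership in the sector $S_i$ has been rewritten as positivity of a linear image under $N_i$.

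Next I would substitute the assumed representation \eqref{eq:Optimality}: on $\Omega_i$ only the $i$-th term in the sum over $\M$ contributes, so
\begin{align*}
N_i\,\psi_\Omega(z)=N_i\bigl(c\,N_i^{-1}\id^i\J(\Omega)(z)\bigr)=c\,\id^i\J(\Omega)(z).
\end{align*}
Combining with the previous step yields $c\,\id^i\J(\Omega)(z)>0$ componentwise, and dividing by the positive scalar $c$ gives $\id^i\J(\Omega)(z)>0$ componentwise in $\R^{M-1}$, which says that every entry $\id^{i\rightarrow j}\J(\Omega)(z)$ for $j\in\M\setminus\{i\}$ is strictly positive. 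By the definition \eqref{eq:TDmulti} of the multi-material topological derivative, this is precisely $\id\J(\Omega)(z)>0$, and since $z\in\Omega$ was arbitrary, Definition \ref{def:TDoptimality} yields local optimality of $\Omega$.

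The only real conceptual point, rather than an obstacle, is the well-definedness of $N_i^{-1}$. This is implicit in the geometry of the regular simplex: the $M-1$ outward normals $n^{j\rightarrow i}$ from the neighboring sectors form a basis of $\R^{M-1}$, so the matrix $N_i$ in \eqref{eq:sectors} is invertible, and the substitution step above is justified. Beyond that, the argument is a direct chain of equivalences between sector membership, positivity under $N_i$, and positivity of the topological derivative vector.
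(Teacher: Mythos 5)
Your proposal is correct and follows essentially the same chain of equivalences as the paper's own proof: sector membership $\psi_\Omega(z)\in S_i$ via \eqref{eq:levelset}, rewriting as $N_i\psi_\Omega(z)>0$ via \eqref{eq:sectors}, substituting \eqref{eq:Optimality} to cancel $N_i N_i^{-1}$, and dividing by $c>0$. The additional remark on the invertibility of $N_i$ is a small point the paper leaves implicit, but otherwise the two arguments coincide.
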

\begin{proof}
    Let $z\in\Omega_i$ for an arbitrary $i\in\M$. By the fundamental property of the level set function \eqref{eq:levelset} and the definition of the sectors \eqref{eq:sectors}, we get
    \begin{align}
        z\in\Omega_i\Leftrightarrow\psi_\Omega(z)\in S_i\Leftrightarrow N_i\psi_\Omega(z)>0\Leftrightarrow N_i\left(c\sum_{k\in\M}\chi_{\Omega_k}(z)N_k^{-1}\id^k\J(\Omega)(z)\right)>0\Leftrightarrow \id^i\J(\Omega)(z)>0,
    \end{align}
    since $z\in\Omega_i$ and $c>0$. Since this holds true for all $i\in\M$, we get
    \begin{align}
        \id^i\J(\Omega)(z)>0 \forall i\in\M\Leftrightarrow\sum_{i\in\M}\chi_{\Omega_i}\id^i\J(\Omega),(z)>0\Leftrightarrow\id\J(\Omega)(z)>0
    \end{align}
    which is the optimality criterion \eqref{eq:TDoptimality}.
\end{proof}

We call the object
\begin{align}\label{eq:TDgeneralized}
    g_\Omega:=\sum_{i\in\M}\chi_{\Omega_i}N_i^{-1}\id^i\J(\Omega)
\end{align}
the generalized topological derivative.
\subsection{Level set algorithm}
We recall the level set algorithm introduced in \cite{Amstutz_Levelset}, generalized to the multi-material setting in \cite{Gangl_Multi}. We restrict the level set functions to the multi-dimensional $L_2$ unit sphere $\mathcal{S}:=\{v\in (L_2(D))^{M-1}:\|v\|_{(L_2(D))^{M-1}}=1\},$ since $k\psi$ describes the same configuration as $\psi$ for all $k>0.$ By choosing $c=\frac{\|\psi_\Omega\|_{(L_2(D))^{M-1}}}{\|g_\Omega\|_{(L_2(D))^{M-1}}}$ in the optimality condition \eqref{eq:Optimality} of Lemma \ref{lem:Optimality}, one gets that a design $\Omega$ is optimal if the $L_2$-angle between level set function $\psi_\Omega$ and generalized topological $g_\Omega$ \eqref{eq:TDgeneralized} is zero:
\begin{align}\label{eq:LStheta}
    \theta=\arccos\frac{\left(\psi_\Omega,g_{\Omega}\right)_{(L_2(D))^{M-1}}}{\|g_{\Omega}\|_{(L_2(D))^{M-1}}}=0.
\end{align}
 This is used as stopping criterion in Algorithm \ref{alg:Levelset} with a tolerance $\varepsilon>0$. The update step in Algorithm \ref{alg:Levelset} is a spherical linear interpolation between $\psi_\Omega$ and $g_\Omega$ with step size $s\in(0,1]$
\begin{align}\label{eq:LSupdate}
    \psi_\Omega=\frac{1}{\sin\theta}\left(\sin((1-s)\theta)\psi_\Omega+\sin(s\theta)\frac{g_{\Omega}}{\|g_{\Omega}\|_{L_2(D)^{M-1}}}\right),
\end{align}
which maintains the normalization of the level set function $\|\psi_\Omega\|_{L_2(D)^{M-1}}=1$.
As shown in \cite{Gangl_Multi}, this algorithm evolves along a descent direction, leading to an optimal design according to Definition \ref{def:TDoptimality}.
\begin{remark}
    The level set function $\psi$ has to be continuous, especially at the interfaces $\partial\Omega_i$. In order to guarantee the continuity also after the update \eqref{eq:LSupdate}, one has has to regularize the generalized topological derivative \eqref{eq:TDgeneralized}, which may be discontinuous. We do this by solving the PDE
    \begin{align}\label{eq:smoothing}
        -\rho\Delta \tilde{g}+\tilde{g}=g_\Omega\;\text{ on }D
    \end{align}
    for $\rho>0$ and update the level set function with $\tilde{g}\in (H^1(D))^{M-1}$. This is similar to a sensitivity filtering, e.g., by spatial averaging, used in density based design optimization \cite{Lazarov_Filter}, and introduces a lengthscale into the optimization.
\end{remark}
\begin{algorithm}
\caption{Level Set Algorithm for Topology Optimization by Topological Derivative.}
    \label{alg:Levelset}
\begin{algorithmic}
    \State\textbf{Choose} $\psi_0\in\mathcal{S}, k=0,k_\mathrm{max}<\infty, \varepsilon>0, 0<s_\mathrm{min}<s_\mathrm{max}\le 1,\gamma\in(0,1), \delta\ge1$
    \State \textbf{Evaluate} $\J(\Omega_{\psi_0})$
    \For{$k=0,...,k_\mathrm{max}$}
    \State \textbf{Compute} $g_{k}=g_{\Omega_{\psi_k}}$ 
    \If{$\theta_k=\arccos\frac{\left(\psi_k,g_{k}\right)_{(L_2(D))^{M-1}}}{\|g_{k}\|_{(L_2(D))^{M-1}}}<\varepsilon$}
    \State\textbf{break}
    \EndIf
    \State $s=s_\mathrm{max}$
    \While{$s> s_\mathrm{min}$}
    \State $\psi_{k+1}=\frac{1}{\sin\theta_k}\left(\sin((1-s)\theta_k)\psi_k+\sin(s\theta_k)\frac{g_{k}}{\|g_{k}\|_{L_2(D)^{M-1}}}\right)$
    \If {$\J(\Omega_{\psi_{k+1}})<\J(\Omega_{\psi_k})$} 
    \State{$s=\min\{s_\mathrm{max},\delta s\}$}
    \State\textbf{break}
    \Else
    \State $s=\max\{s_\mathrm{min},\gamma s\}$
    \EndIf
    \EndWhile
    \EndFor
\end{algorithmic}
\end{algorithm}
\subsection{Volume constraint}
In most applications, it may be useful to control the usage of some materials, e.g., in our application, the amount of permanent-magnet material. We introduce the volume constraint
\begin{align}
    \V(\Omega):= \sum_{i\in\M_\V}|\Omega_i| \le V^*,\quad \M_\V\subsetneq\M,
\end{align}
where $\M_\V$ is the index set of the materials which we want to constrain.
The topological derivative of $\V$ is given by
\begin{align}
    \id^{i\to j}\V(\Omega)=-\begin{cases}
        1&\text{ if }i\in\M_\V\\0&\text{ else}
    \end{cases}+\begin{cases}
        1&\text{ if }j\in\M_\V\\0&\text{ else}
    \end{cases}.
\end{align}
A common way is to add the volume constraint by a linear penalization to the objective 
\begin{align}\label{eq:Vollinear}
\J(\Omega)+\ell\V(\Omega).    
\end{align}
This balances the performance $\J(\Omega)$ and the material usage $\V(\Omega)$ according to the weight $\ell,$ which has to be determined experimentally. A more sophisticated approach is to include the constraint by an Augmented Lagrangian \cite{Nocedal_Wright} which, however, also requires tuning of the involved parameters.

In \cite{Beck_Deflation}, the authors present a very simple method to incorporate a volume constraint by shifting the level set function accordingly after each update step. In our application this is would lead to an infinite loop of derivative-driven updates and volume corrections. Therefore, we include this shift in the update step \eqref{eq:LSupdate}. We propose an approach based on the simple linear penalization \eqref{eq:Vollinear} with an adaptive weight $\ell$, such that the constraint is strictly fulfilled in every iteration. We consider the weight $\ell$ explicitly by solving
\begin{align}
    \min_\Omega\J_\ell(\Omega),\quad\J_\ell(\Omega):=\J(\Omega)+\ell\V(\Omega),
\end{align}
and denote its generalized topological derivative by $g^{\J_\ell}_\Omega=g_\Omega^\J+\ell g_\Omega^\V$, where $g_\Omega^\J$ and $g_\Omega^\V$ are the generalized topological derivatives \eqref{eq:TDgeneralized} of $\J(\Omega)$ and $\V(\Omega)$, respectively. We start with $\ell=0$ and compute the next iterate $\Omega_{k+1}$ by \eqref{eq:LSupdate}. If $\V(\Omega_{k+1})>V^*$, we dismiss the step and try to find a weight $\ell$, such that the constraint is fulfilled. For a too big $\ell$ this may result in an overpenalization and reduce the performance $\J(\Omega)$. Therefore, we choose the smallest $\ell$ possible by bisection. Since the evaluation of the volume constraint $\V(\Omega)$ is usually very fast compared to the evaluation of $\J(\Omega),$ which may include solving PDEs,  this can be done without slowing down the overall optimization. The design $\Omega_{k+1}$ found by this procedure should still give a decrease in the objective $\J(\Omega_{k+1})<\J(\Omega_k)$, otherwise the step size $s$ gets decreased and a new weight $\ell$ has to be computed. The incorporation of this method into the level set optimization is described in Algorithm \ref{alg:Volume}.
\begin{remark}
Let us illustrate this for our application, where we distribute four materials $\Omega=(\Omega_f,\Omega_a,\Omega_{m_1},\Omega_{m_2})$ and aim to control the volume of the permanent magnets $\Omega_{m_1},\Omega_{m_2}$. For large $\ell$, the term from the volume constraint dominates in the update of the level set function\eqref{eq:LSupdate} 
\begin{align*}
    \frac{g_\Omega^{\J_\ell}}{\|g_\Omega^{\J_\ell}\|_{L_2(D)^{M-1}}}=\frac{g_\Omega^\J+\ell g_\Omega^\V}{\|g_\Omega^\J+\ell g_\Omega^\V\|_{L_2(D)^{M-1}}}\sim\frac{g_\Omega^\V}{\|g_\Omega^\V\|_{L_2(D)^{M-1}}}.
\end{align*}
It aims to minimize the magnet volume, therefore $g_\Omega^\V$ will "move $\psi$ away" from magnets, towards iron or air. If the original configuration fulfills the constraint $\V(\Omega_\psi)\le V^*$, it is guaranteed that $\V(\Omega_{\psi+tg_\Omega^\V})\le V^*$ for all $t\ge 0$. For this reason there exists for all $\psi$ with $\V(\Omega_\psi)\le V^*$ and $s\in (0,1)$ a weight $\ell<\infty$ such that the updated level set function $\psi^*=\frac{1}{\sin\theta}\left(\sin((1-s)\theta)\psi+\sin(s\theta)\frac{g_\Omega^{\J_\ell}}{\|g_\Omega^{\J_\ell}\|_{L_2(D)^{M-1}}}\right)$ fulfills the volume constraint $\V(\Omega_{\psi^*})\le V^*$.
\end{remark}
\begin{algorithm}
\caption{Level Set Algorithm for Topology Optimization by Topological Derivative with Explicit Volume Control.}
    \label{alg:Volume}
\begin{algorithmic}
    \State\textbf{Choose} $\psi_0\in\mathcal{S}$ with $\V(\Omega_{\psi_0})\le V^*, k=0,k_\mathrm{max}<\infty, \varepsilon>0, 0<s_\mathrm{min}<s_\mathrm{max}\le 1,\gamma\in(0,1), \delta\ge1,\Delta_\ell>0,\varepsilon_\ell>0$
    \State \textbf{Evaluate} $\J(\Omega_{\psi_0})$
    \For{$k=0,...,k_\mathrm{max}$}  
    \State \textbf{Compute} $g_{k}^\J=g_{\Omega_{\psi_k}}^\J,g_k^\V= g_{\Omega_{\psi_k}}^\V$
    \State \textbf{Define} $g_k(\ell)=g_k^\J+\ell g_k^\V$
    \State \textbf{Define} $\theta_k(\ell)=\arccos\frac{\left(\psi_k,g_{k}(\ell)\right)_{(L_2(D))^{M-1}}}{\|g_{k}(\ell)\|_{(L_2(D))^{M-1}}}$
    \State $\ell=0, s=s_\mathrm{max}$
    \While{$\theta_k(\ell)>\varepsilon$}
    \State $\psi_{k+1}(\ell)=\frac{1}{\sin\theta_k(\ell)}\left(\sin((1-s)\theta_k(\ell))\psi_k+\sin(s\theta_k(\ell))\frac{g_{k}(\ell)}{\|g_{k}(\ell)\|_{L_2(D)^{M-1}}}\right)$
    \While{$\V(\Omega_{\psi_{k+1}(\ell)})> V^*$}
    \State $\Delta_\ell=2\Delta_\ell, \ell=\ell+\Delta_\ell$
    \EndWhile
    \If{$\ell>0$}
    \While{$|\V(\Omega_{\psi_{k+1}(\ell)})-V^*|>\varepsilon_\ell$}
    \If{$\mathrm{sgn}(\V(\Omega_{\psi_{k+1}(\ell)})- V^*)=\mathrm{sgn}(\V(\Omega_{\psi_{k}})- V^*)$ }
    \State $\Delta_\ell=\Delta_\ell/2, \ell=\ell+\Delta_\ell$
    \Else
    \State $\Delta_\ell=-\Delta_\ell/2, \ell=\ell+\Delta_\ell$
    \EndIf
    \EndWhile
    \EndIf
    \State $\psi_{k+1}=\psi_{k+1}(\ell)$
    \If {$\J(\Omega_{\psi_{k+1}})<\J(\Omega_{\psi_k})$} 
    \State{$s=\min\{s_\mathrm{max},\delta s\}$}
    \State\textbf{break}
    \EndIf
    \If {$s = s_\mathrm{min}$} 
    \State\textbf{break}
    \EndIf
    \State $s=\max\{s_\mathrm{min},\gamma s\}$
    \EndWhile
    \EndFor
\end{algorithmic}
\end{algorithm}

\section{Topological derivative for torque maximization}\label{sec:T}
In this section, we are interested to optimize the average torque of the machine for a fixed volume of permanent-magnet material. Since the eddy currents have a negligible effect on the torque, we consider the magnetostatic problem, i.e., a simplification of \eqref{eq:magnetoquasistatics} 
\begin{align}\label{eq:magnetostatics}\begin{aligned}
    \widetilde{\cur}_xh_{\Omega(t)}(\cur_x u(t))&=j(t)&&\text{ in }D_\mathrm{all}\\
    u(t)&=0&&\text{ on }\Gamma_S\\
    u(t)|_{\Gamma_1}&=-u(t)|_{\Gamma_2},&&
\end{aligned}
\end{align}
for $t\in[0,T]$. This problem can be seen as a sequence of static problems which we have to solve for different rotor positions $\alpha$. We do this by the harmonic mortar approach \cite{Egger_Mortar}, where the authors consider \eqref{eq:magnetostatics} for rotor and stator separately and reinforce continuity by a Lagrange multiplier. We introduce the spaces $\mH=\{v:v|_{D_R}\in H^1(D_R), v|_{D_S}\in H^1(D_S), v|_{\Gamma_S}=0, v|_{\Gamma_1}=-v|_{\Gamma_2}\}, \mL=H^{-\tfrac{1}{2}}(\Gamma)$ and denote by ${e:(\mH\times\mL)\times(\mH\times\mL)\times[0,2\pi)\rightarrow\R^2}$ the magnetostatic state operator
\begin{align}\label{eq:mortar}
        e((u,\lambda),(v,\mu),\alpha)&=\begin{pmatrix}
       \int_{D_\mathrm{all}}h_\Omega(\cur u)\cdot\cur v-j(\alpha)\dx + \langle\lambda,(v|_{D_S}-v|_{D_R}\circ\rho_{-\alpha})\rangle_{\Gamma}\\
        \langle\mu,(u|_{D_S}-u|_{D_R}\circ\rho_{-\alpha})\rangle_{\Gamma}
         \end{pmatrix}.
\end{align}
Here, the three phase source current density is given by
\begin{align}
    j(\alpha)=\chi_{D_{A^+}}\hat{j}\sin(4\alpha+\phi_0)-\chi_{D_{B^-}}\hat{j}\sin(4\alpha+\phi_0+\tfrac{2\pi}{3})+\chi_{D_{C^+}}\hat{j}\sin(4\alpha+\phi_0+\tfrac{4\pi}{3})
\end{align}
with amplitude $\hat{j}$ and load angle $\phi_0$ as in Table \ref{tab:data}. Note that in contrast to \eqref{eq:magnetoquasistatics}, the vector potential $u$ is now used in Lagrangian coordinates for a fixed material configuration $\Omega$ and the rotation of the rotor is accounted for by the Lagrange multiplier. We discretize the Lagrange multiplier by harmonic basis functions which allow a very simple incorporation of the coordinate transformation $\rho_{-\alpha}.$ We use the formula from \cite{Egger_Mortar}, which is based on energy considerations, to compute the mechanical torque by
\begin{align}\label{eq:torque}
    T(u_\alpha,\lambda_\alpha,\alpha)=r_{\Gamma}\langle\lambda_\alpha,(\cur u_\alpha\cdot n_{\Gamma})\circ\rho_{-\alpha}\rangle_{\Gamma},
\end{align}
where $r_{\Gamma}$ is the outer radius of the rotor $D_R$ and $n_{\Gamma}$ the outer unit normal vector of $\Gamma$ and $u_\alpha,\lambda_\alpha$ are the solution of
\begin{align}
    e((u_\alpha,\lambda_\alpha),(v,\mu),\alpha)&=\begin{pmatrix}
        0\\0
    \end{pmatrix}
\end{align}
for all $(v,\mu)\in\mH\times\mL$ for a given rotor position $\alpha$.
\begin{remark}
    In our setting, the magnetic fields in the rotor and inherently also the torque are periodic by $T=\tfrac{1}{6N_\mathrm{pp}V_\mathrm{rot}}$, see \eqref{eq:periodicity}, corresponding to an angle of $\frac{360^\circ}{N_\mathrm{pp}6}=15^\circ$. To compute the average torque, we choose $N=11$ equidistant rotor positions
    \begin{align}\label{eq:angles}
        \alpha^n=15^\circ\frac{n}{N}, n=0,...,N-1.
    \end{align}
    We will denote by $(u^n,\lambda^n)=(u_{\alpha^n},\lambda_{\alpha^n})$ the solution of \eqref{eq:mortar} for rotor position $\alpha^n$ and abbreviate by a bold symbol
    \begin{align}
        \boldsymbol{v}=(v^0,...,v^{N-1})^T
    \end{align}
    the vector of this quantity for all time steps.
\end{remark}
\begin{remark}\label{eq:staticDiscrete}
    After discretization, system \eqref{eq:mortar} can be written as
    \begin{align}
        \begin{pmatrix}
            A_\Omega(u)+B_\alpha^T\lambda\\B_\alpha u\end{pmatrix}=\begin{pmatrix}
            j_\alpha\\0
        \end{pmatrix},
    \end{align}
    where $A_\Omega$ corresponds to the nonlinear magnetostatic operator, $B_\alpha$ realizes the coupling between stator and rotor rotated by $\alpha$ and $j_\alpha$ represents the source current. Note, that $B_\alpha$ is independent of $\Omega$, which will be useful in design optimization, where $\Omega$ changes. We apply Newton's method, to solve this system iteratively. The Newton update $(\delta u,\delta\lambda)^T$ is computed by
    \begin{align}
        \begin{pmatrix}
            A_\Omega'(u)&B_\alpha^T\\B_\alpha&0
        \end{pmatrix}\begin{pmatrix}
            \delta u\\\delta\lambda
        \end{pmatrix}=-\begin{pmatrix}
            A_\Omega(u)-j_\alpha+B_\alpha^T\lambda\\B_\alpha u
        \end{pmatrix}.
    \end{align}
\end{remark}

 Our first objective is to minimize the negative (i.e. maximize the positive) average torque
 \begin{align}\label{eq:CostTorque}
     \overline{\T}(\Omega)=-\frac{1}{N}\sum_{n=0}^{N-1}T(u^n(\Omega),\lambda^n(\Omega),\alpha^n),
 \end{align}
 where $(\boldu(\Omega),\boldl(\Omega))\in\mH^N\times\mL^N$ is the unique solution of \eqref{eq:mortar} for rotor positions $\boldsymbol{\alpha} = (\alpha^0, \dots, \alpha^{N-1})^T$ and design $\Omega$.

\subsection{Topological derivative for torque maximization}
The topological derivative of the torque in the context of nonlinear magnetostatics was first derived in \cite{AmstutzGangl2019}, based on the sensitivity analysis introduced in \cite{Amstutz_Sensitivity}. The evaluation of the topological derivative of \eqref{eq:CostTorque}, presented here, is based on the general framework from \cite{Gangl_Automated}. First we introduce the Lagrangian for the torque maximization problem
\begin{align}\label{eq:LagTorque}\begin{aligned}
    \mathcal{G_{\T}}(\Omega,(\boldsymbol{u},\boldsymbol{\lambda}),(\boldsymbol{p},\boldsymbol{\eta}))=\sum_{n=0}^{N-1}\bigg(-\frac{r_{\Gamma}}{N}\langle\lambda^n,(\cur u^n\cdot n_{\Gamma})\circ\rho_{-\alpha^n}\rangle_{\Gamma}+e((u^n,\lambda^n),(p^n,\eta^n),\alpha^n)\bigg).
\end{aligned}
\end{align}
The adjoint states $(\boldp,\bolde)\in\mH^N\times\mL^N$ are given via the adjoint equation, which is the derivative of the Lagrangian \eqref{eq:LagTorque} with respect to the states $(u^n,\lambda^n)$
\begin{align}\label{eq:adjointTorque}
\begin{aligned}
\nabla_{u^n,\lambda^n}e((u^n,\lambda^n),(p^n,\eta^n),\alpha^n)^T(v,\mu)=\frac{1}{N}r_{\Gamma}\begin{pmatrix}
    \langle\lambda^n,(\cur v\cdot n_{\Gamma})\circ\rho_{-\alpha}\rangle_{\Gamma}\\
    \langle\mu,(\cur u^n\cdot n_{\Gamma})\circ\rho_{-\alpha}\rangle_{\Gamma}
\end{pmatrix}
\end{aligned}
\end{align}
for all $(v,\mu)\in\mH\times\mL, n=0,...,N-1.$ Hereby, the derivative of $e$ reads
\begin{align}\label{eq:Operatoradjoint}
    \nabla_{u,\lambda}e((u,\lambda),(p,\eta),\alpha)^T(v,\mu)=\begin{pmatrix}
       \int_{D_\mathrm{all}}\mathrm{d}_bh_\Omega(\cur u)\cur p\cdot\cur v\dx + \langle \eta,(v|_{D_S}-v|_{D_R}\circ\rho_{-\alpha})\rangle_{\Gamma}\\
        \langle\mu,(p|_{D_S}-p|_{D_R}\circ\rho_{-\alpha})\rangle_{\Gamma}
         \end{pmatrix}.
\end{align}
The topological derivative is defined point wise and depends solely on the point evaluations of state and adjoint, which we abbreviate by $U^n_z:=\cur u^n(z), P^n_z:=\cur p^n(z).$ For changes from material $i\in\I$ to $j\in\I\setminus\{i\}$, the topological derivative formula is given by
\begin{align}
\label{eq:TDformula}
\begin{aligned}
    \id^{i\rightarrow j}&\overline{\T}(\Omega)(z)=f^{i\rightarrow j}_\T(\boldsymbol{U}_z,\boldsymbol{P}_z):=\\&\sum_{n=0}^{N-1}\bigg(\frac{1}{|\omega|}\int_{\R^2}(h_\omega^{i\rightarrow j}(\cur K_{U_z^n}+U_z^n)-h_\omega^{i\rightarrow j}(U_z^n)-\mathrm{d}_bh_\omega^{i\rightarrow j}(U_z^n)\cur K_{U_z^n})\cdot P_z^n\ \id\xi\\
    &+\frac{1}{|\omega|}\int_\omega(\mathrm{d}_bh_j(U_z^n)-\mathrm{d}_bh_i(U_z^n))\cur K_{U_z^n}\cdot P_z^n\ \id\xi+(h_j(U_z^n)-h_i(U_z^n))\cdot P_z^n\bigg),
\end{aligned}
\end{align}
where $K_{U}\in BL(\R^2):=\{v\in L_2^{loc}(\R^2):\nabla v\in L_2(\R^2)^2\}/\R$ solves the auxiliary exterior problem
\begin{align}
    \int_{\R^2}(h_\omega^{i\rightarrow j}(\cur K_{U}+U)-h_\omega^{i\rightarrow j}(U))\cdot\cur v\ \id\xi=-\int_\omega(h_j(U)-h_i(U))\cdot\cur v\ \id\xi
    \label{eq:exterior}
\end{align}
for all $v\in BL(\R^2)$ with  $U\in\R^2, \omega=B_1(0)$ and $ h_\omega^{i\rightarrow j}(b)=h_i(b)\chi_\omega+h_j(b)\chi_{\R^2\setminus\overline{\omega}}$. 
\subsection{Evaluation of the topological derivative}
For linear materials like air and permanent magnet, i.e $i,j\in\{a,m_1,m_2\}$, the solution $K_U$ of the exterior problem \eqref{eq:exterior} has an analytical representation with the nice property
\begin{align}
    \cur_\xi K_U|_\omega=-\frac{(\nu_j-\nu_i)U-(M_j-M_i)}{\nu_j+\nu_i},
\end{align}
see e.g. \cite{Gangl_Surface}, which leads to
\begin{align}\label{eq:TDlinear}
        f_\T^{i\rightarrow j}(\boldsymbol{U}_z,\boldsymbol{P}_z)=\sum_{n=0}^{N-1}2\frac{\nu_i}{\nu_j+\nu_i}\left((\nu_j-\nu_i)U^n_z-(M_j-M_i)\right)\cdot P_z^n,
\end{align}
with $\nu_a=\nu_0, \nu_{m_1}=\nu_{m_2}=\nu_m, M_a=(0,0)^T, M_{m_i}=B_Re_{\varphi_i}$ as in Table \ref{tab:data}. Note, that the first line of \eqref{eq:TDformula} vanishes if both material laws involved are linear. For nonlinear materials, we have to evaluate the topological derivative by \eqref{eq:TDformula} which is very expensive, since we have to solve the exterior problem \eqref{eq:exterior} for every $U^n_z\in\R^2, n=0,...,N-1$. We therefore aim to precompute sample values for selected $\boldsymbol{U}_z,\boldsymbol{P}_z\in\R^{2\times N}$ and to interpolate them online. First, we observe that the different rotor positions are independent of each other. It is thus sufficient to sample $U,P\in\R^2$. Next we see that the dependence on the adjoint $P^n_z$ is linear. Therefore, we can write
\begin{align}
        f^{i\rightarrow j}_\T(\boldsymbol{U}_z,\boldsymbol{P}_z)=\sum_{n=0}^{N-1}\begin{pmatrix}
            f_1^{i\rightarrow j}(U_z^n)\\f_2^{i\rightarrow j}(U_z^n)
        \end{pmatrix}\cdot P_z^n, 
        \label{eq:TDevaluate}
    \end{align}
    with
    \begin{align}\label{eq:TDsample}
    \begin{split}
        f_1^{i\rightarrow j}(U)=&\frac{1}{|\omega|}\int_{\R^2}(h_\omega^{i\rightarrow j}(\cur K_{U}+U)-h_\omega^{i\rightarrow j}(U)-\mathrm{d}_bh_\omega^{i\rightarrow j}(U)\cur K_{U})\cdot e_0\ \id\xi\\
    &+\frac{1}{|\omega|}\int_\omega(\mathrm{d}_bh_j(U)-\mathrm{d}_bh_i(U))\cur K_{U}\cdot e_0\ \id\xi+(h_j(U)-h_i(U))\cdot e_0,\\
    f_2^{i\rightarrow j}(U)=&\frac{1}{|\omega|}\int_{\R^2}(h_\omega^{i\rightarrow j}(\cur K_{U}+U_z^n)-h_\omega^{i\rightarrow j}(U)-\mathrm{d}_bh_\omega^{i\rightarrow j}(U)\cur K_{U})\cdot e_\frac{\pi}{2}\ \id\xi\\
    &+\frac{1}{|\omega|}\int_\omega(\mathrm{d}_bh_j(U)-\mathrm{d}_bh_i(U))\cur K_{U}\cdot e_\frac{\pi}{2}\ \id\xi+(h_j(U)-h_i(U))\cdot e_\frac{\pi}{2}.
    \end{split}
    \end{align}
Similarly as in \cite{AmstutzGangl2019}, one can check, that the exterior problem commutes with rotations of $U\in\R^2$, given by a rotation matrix $R\in\R^{2\times 2}$
\begin{align}
    \cur K_{RU}=R\cur K_U
\end{align}
which transfers to the topological derivative
\begin{align}\label{eq:TDtransform}
    \begin{pmatrix}
        f_1^{i\rightarrow j}(U)\\
        f_2^{i\rightarrow j}(U)
    \end{pmatrix}=R\begin{pmatrix}
            f_1^{i\rightarrow j}(R^TU)\\f_2^{i\rightarrow j}(R^TU)
        \end{pmatrix}.
\end{align}
We can use this property to reduce the sampling effort. For isotropic materials like iron and air, i.e. $i,j\in\{f,a\}$, we can reduce the precomputations to the modulus of $U$. We write $U=tR_\beta e_0$ and use
\begin{align}\label{eq:Rbeta}\begin{aligned}
    \begin{pmatrix}
        f_1^{i\rightarrow j}(U)\\
        f_2^{i\rightarrow j}(U)
    \end{pmatrix}=R_\beta\begin{pmatrix}
            f_1^{i\rightarrow j}(te_0)\\f_2^{i\rightarrow j}(te_0)
        \end{pmatrix},
\end{aligned}
\end{align}
to incorporate the angle $\beta$ in the online phase. The only case left is the change between iron and permanent magnets. Here, we can apply \eqref{eq:TDtransform} to the magnetization directions ${\varphi_i}$
\begin{align}\label{eq:Rphi}
    &\begin{pmatrix}
        f_1^{f\rightarrow m_i}(U)\\
        f_2^{f\rightarrow m_i}(U)
    \end{pmatrix}=R_{\varphi_i}\begin{pmatrix}
            f_1^{f\rightarrow m_0}(R_{\varphi_i}^TU)\\f_2^{f\rightarrow m_0}(R^T_{\varphi_i} U)
        \end{pmatrix},
\end{align}
where $m_0$ represents a permanent magnet with direction $e_0$, i.e. $h_{m_0}(b)=\nu_m(b-B_Re_0).$ We still need to sample $f_1^{f\rightarrow m_0}(U), f_2^{f\rightarrow m_0}(U)$ for $U\in\R^2$ but we are free to choose arbitrary magnetization directions ${\varphi_i}$. For changes from magnet to iron, one simply has to exchange the role of iron and magnet.

Summarizing, we have three different cases: If both materials are linear, i.e. $i,j\in\{a,m_1m_2\}$, we evaluate \eqref{eq:TDlinear} without sampling. Secondly, if both materials are isotropic, i.e. $i,j\in\{i,a\}$, we have to sample \eqref{eq:TDsample} for $t=|U|\in\R$ and apply \eqref{eq:Rbeta}. Finally, for iron and permanent magnets, we have to take samples of \eqref{eq:TDsample} for $U\in\R^2$ for a reference orientation of the permanent magnet and incorporate its real orientation online by \eqref{eq:Rphi}.

\section{Topological derivative for coupled electromagnetic-thermal problem}\label{sec:TCt}
The magnetic field in the machine induces eddy currents in conductors, of which the paths close at the front and back sides of the conductors. They decrease the efficiency of the machine since the energy dissipates. Further, the heat produced can lead to significant damage of the materials. In the iron parts, one can almost avoid the eddy currents by lamination, since this reduces the conductivity in axial direction heavily. It is possible to segment also the permanent magnets. However, due to the additional manufacturing complexity, they are solid throughout the whole axial length in many applications. Additionally, depending on their material composition, high temperatures will lead to a demagnetization of the permanent magnets. Here, we will consider non-segmented neodymium-iron-boron permanent magnets which start to demagnetize at $90^\circ\mathrm{C}$. It is therefore of high interest not only to control the total eddy-current losses, but to constrain the maximal temperature produced by eddy currents.
\subsection{Magnetoquasistatic}
Based on Faraday's law \eqref{eq:Maxwell}, one can compute the third component of the eddy-current density by
\begin{align}
    j_e=-\sigma\partial_tu,
\end{align} where $u$ is the third component of the magnetic vector potential. We could compute $j_e$ in a post processing based on the solutions $u$ of \eqref{eq:mortar}. As remarked in \cite{Hameyer_Eddy}, this would lead to a significant overrating of the eddy-current losses. Instead we focus on the magnetoquasistatic PDE \eqref{eq:magnetoquasistatics}, where eddy currents are directly included
\begin{align}\label{eq:magnetoquasistatics2}\begin{aligned}
    \sigma_{\Omega(t)}\partial_tu(t) + \widetilde{\cur}_xh_{\Omega(t)}(\cur_x u(t))&=j\quad&&\text{ in }D_\mathrm{all}\\
    u(t)&=0\quad&&\text{ on }\Gamma_S\\
    u(t)|_{\Gamma_1}&=-u(t)|_{\Gamma_2}&&\\
    \sigma_\Omega u(0)&=\sigma_\Omega(T)\quad&&\text{ on }D_\mathrm{all},
\end{aligned}
\end{align}
for $t\in[0,T], T$ as in \eqref{eq:periodicity}. With implicit Euler on $N$ equidistant timesteps
\begin{align}\label{eq:timesteps}
    t^n=\alpha^n\frac{60}{V_\mathrm{rot}360^\circ}=n\frac{T}{N}=n\tau,\quad \tau=\tfrac{60}{N_\mathrm{pp}V_\mathrm{rot}6N},
\end{align}
fitting to the rotor positions $\alpha^n$, we arrive at the time-discrete scheme using the harmonic mortar approach to account for the rotation: Find $(\boldu,\boldl)\in \mH^N\times\mL^N$ with
\begin{align}\label{eq:quasimortar}
    \begin{aligned}
    \begin{pmatrix}
        \int_{D\mathrm{all}}\sigma_\Omega\frac{u^n-u^{n-1}}{\tau}v\dx\\0
    \end{pmatrix}+e((u^n,\lambda^n),(v^n,\mu^n),\alpha^n)&=\begin{pmatrix}
        0\\0
    \end{pmatrix}\\
        \sigma_\Omega u^{-1}&=\sigma_\Omega u^{N-1}
    \end{aligned}
\end{align}
for all test functions $(\boldv,\boldm)\in \mH^N\times\mL^N, n=0,...,N-1$ with the magnetostatic operator $e$ as in \eqref{eq:mortar}. 
\begin{remark}\label{rem:quasiDiscrete}
   Similarly as in Remark \ref{eq:staticDiscrete} we state the fully discretized version of \eqref{eq:quasimortar}
   \begin{align}\begin{aligned}
       \tfrac{\sigma_m}{\tau}M_\Omega(u^n-u^{n-1})+A_\Omega(u^n)+B_n^T\lambda^n&=j_n\\
       B_nu^n&=0\\
       M_\Omega u^{-1}&=M_\Omega u^{N-1},
   \end{aligned}
   \end{align}
   where $M_\Omega$ is the mass matrix supported on $\Omega_{m_1}\cup\Omega_{m_2},$ $A_\Omega$ is coming from the nonlinear magnetostatic operator, $B_n$ realizes the rotation by angle $\alpha^n$ and $j_n$ represents the source current density. 
   We solve this system all-in-one using Newton's method. The residuum $r$ reads
   \begin{align}
       r(\boldu,\boldl)=\begin{pmatrix}
            \tfrac{\sigma_m}{\tau}M_\Omega(u^0-u^{N-1})+\tilde{A}_{\Omega,n}(u^0,\lambda^0)\\
            \tfrac{\sigma_m}{\tau}M_\Omega(u^1-u^{0})+\tilde{A}_{\Omega,n}(u^1,\lambda^1)\\
            \vdots\\
           \tfrac{\sigma_m}{\tau}M_\Omega(u^{N-1}-u^{N-2})+\tilde{A}_{\Omega,n}(u^{N-1},\lambda^{N-1})
        \end{pmatrix}.
   \end{align}
   The linearized system, which we have to solve in each Newton step is
    \begin{align}
       \tilde{K}_{\Omega,n}(\boldu)\boldsymbol{\delta}=-r(\boldu,\boldl),
    \end{align}
    where $\boldsymbol{\delta}=(\delta u^0,\delta\lambda^0,...,\delta u^{N-1},\delta\lambda^{N-1})^T$ is the update of $(u^0,\lambda^0,...,u^{N-1},\lambda^{N-1})^T$ and
    \begin{align}\label{eq:K_allinone}
    \tilde{K}_{\Omega,n}(\boldu)&=\begin{pmatrix}
            \tilde{M}_\Omega+\tilde{A}_{\Omega,n}'(u^0)&0&\dotsi&-\tilde{M_\Omega}\\
            -\tilde{M_\Omega}&\tilde{M_\Omega}+\tilde{A}_{\Omega,n}'(u^1)&\dotsi&0\\
            \vdots&\ddots&\ddots&\vdots\\
            0&\dotsi&-\tilde{M_\Omega}&\tilde{M_\Omega}+\tilde{A}_{\Omega,n}'(u^{N-1})\\
        \end{pmatrix},
    \end{align}
    where the block components are given by
    \begin{align}
        \tilde{M}_\Omega&=\begin{pmatrix}
            \tfrac{\sigma_m}{\tau}M_\Omega&0\\0&0
        \end{pmatrix},\tilde{A}_{\Omega,n}(u^n,\lambda^n)=\begin{pmatrix}
            A_\Omega(u^n)-j_n+B_n^T\lambda^n\\B_nu^n
        \end{pmatrix},\tilde{A}_{\Omega,n}'(u^n)=\begin{pmatrix}
            A_\Omega'(u^n)&B_n^T\\B_n&0
        \end{pmatrix}.
    \end{align}
    The matrix $M_\Omega$ is independent of the rotation angle, whereas the matrices $B_n$ are independent of the design $\Omega$ and can be pre-assembled for all rotor positions $\alpha^n$. For large $N$ and fine spatial discretizations, this system grows quickly and one has to apply special techniques to solve it efficiently. However, in our applications this is not the case. We speed up the computations by taking solutions of the magnetostatic problem \eqref{eq:mortar} as initial guess for Newton's method, which ensures convergence after a few iterations. 
\end{remark}
\begin{remark}
    Instead of considering periodic boundary conditions in \eqref{eq:magnetoquasistatics2}, one could run an initial value problem sufficiently long with a solution of the magnetostatic problem \eqref{eq:magnetostatics} as initial condition. We observed, on the one hand, that the torque quickly reaches its steady state. On the other hand, the eddy-current losses need far longer. Since we are here especially interested in these losses, we choose the first approach yielding the steady state by construction of the problem.
\end{remark}
\begin{remark}
    For the unique solvability of the time periodic magnetoquasistatic problem \eqref{eq:magnetoquasistatics}, we refer to \cite{Cesarano_SpaceTime}.
\end{remark}
\subsection{Topological derivative for magnetoquasistatics}
We are still interested in maximizing the average torque given similarly as in \eqref{eq:torque}:
\begin{align}\label{eq:CosttorqueED}
    \overline{\T_\mathrm{ed}}(\Omega)=-\frac{1}{N}\sum_{n=0}^{N-1}T(u^n(\Omega),\lambda^n(\Omega),\alpha^n),
\end{align}
where $(\boldu(\Omega),\boldl(\Omega))\in \mH^N\times\mL^N$ are now the solution of the magnetoquasistatic equation \eqref{eq:quasimortar} for a design $\Omega$. By the subscript $\T_\mathrm{ed}$, we denote that the torque is computed considering eddy currents. The topological derivative for this problem was derived in \cite{Krenn_Spacetime} considering a linear material behavior. These results generalize straightforwardly to the nonlinear case. We will see that the topological derivative formula is highly similar to the one of the magnetostatic case \eqref{eq:TDformula}, considered in Section \ref{sec:T}. First, we introduce the Lagrangian of \eqref{eq:CosttorqueED} based on the Lagrangian of the static problem $\mathcal{G}_\T$ \eqref{eq:LagTorque}
\begin{align}\label{eq:LagTorqueED}\begin{aligned}
    \mathcal{G}_{\T_\mathrm{ed}}&(\Omega,(\boldu,\boldl),(\boldp,\bolde))=\mathcal{G}_{\T}(\Omega,(\boldu,\boldl),(\boldp,\bolde))+\sum_{n=0}^{N-1}\int_{D_\mathrm{all}}\frac{\sigma_\Omega}{\tau}(u^n-u^{n-1})p^n\dx.
\end{aligned}
\end{align}
Taking the derivative with respect to the states $(u^n,\lambda^n)$ yields the adjoint states $(p^n,\eta^n)\in\mH\times\mL$, which are the solutions of the adjoint equation
\begin{align}\label{eq:adjointTorqueED}
\begin{aligned}
\begin{pmatrix}
     \int_{D\mathrm{all}}\frac{\sigma_\Omega}{\tau}(p^n-p^{n+1})v\dx\\0
\end{pmatrix}+\nabla_{u^n,\lambda^n}e(&(u^n,\lambda^n),(p^n,\eta^n),\alpha^n)(v,\mu)\\
   &=\frac{1}{N}r_{\Gamma}\begin{pmatrix}
    \langle\lambda^n,(\cur v\cdot n_{\Gamma})\circ\rho_{-\alpha}\rangle_{\Gamma}\\
    \langle\mu,(\cur u^n\cdot n_{\Gamma})\circ\rho_{-\alpha}\rangle_{\Gamma}
\end{pmatrix}\\
        \sigma_\Omega p^N&=\sigma_\Omega p^0,
\end{aligned}
\end{align}
for all $(v,\mu)\in\mH\times\mL, n=0,...,N-1$, with $\nabla_{u,\lambda}e$ as in \eqref{eq:Operatoradjoint}.
\begin{remark}
    We observe in the first term of \eqref{eq:adjointTorqueED}, that the structure of the adjoint equation is now "backwards-in-time". We need $p^{n+1}$ in order to solve for $p^n$. However, the periodic boundary condition of the forward problem \eqref{eq:quasimortar} was transferred too. Therefore, we will use again the all-in-one approach, to solve this system. Referring to the notation of Remark \ref{rem:quasiDiscrete}, the discrete system for the adjoint equation \eqref{eq:adjointTorqueED} reads
    \begin{align}\begin{aligned}
            \tilde{K}^T_{\Omega,n}(\boldu)\begin{pmatrix}
            (p^0,\eta^0)^T\\(p^1,\eta^1)^T\\\vdots\\(p^{N-1},\eta^{N-1})^T
        \end{pmatrix}=\frac{1}{N}\begin{pmatrix}
        \nabla_{u,\lambda}T(u^0,\lambda^0)\\\nabla_{u,\lambda}T(u^1,\lambda^1)\\\vdots\\\nabla_{u,\lambda}T(u^{N-1},\lambda^{N-1})
       \end{pmatrix}.
    \end{aligned}
    \end{align}
    The system matrix is the transposed Newton matrix from Remark \ref{rem:quasiDiscrete}. The transposition corresponds to the backwards-in-time structure.
\end{remark}
For a point $z\in\Omega_i$, the topological derivative of \eqref{eq:CosttorqueED} from material $i\in\I$ to material $j\in\I\setminus\{i\}$ is given by
\begin{align}\label{eq:TDformulaED}
    \id^{i\rightarrow j}\overline{\T_\mathrm{ed}}(\Omega)(z)=f^{i\rightarrow j}_{\T_\mathrm{ed}}(\boldsymbol{U}_z,\boldsymbol{P}_z,\boldsymbol{U}_z,\boldsymbol{P}_z):=f^{i\rightarrow j}_\T(\boldsymbol{U}_z,\boldsymbol{P}_z)+\sum_{n=0}^{N-1}\frac{\sigma_j-\sigma_i}{\tau}(u^n_z-u^{n-1}_z)p^n_z,
\end{align}
where one can reuse $f^{i\rightarrow j}_\T$ from \eqref{eq:TDformula} and insert $U^n_z=\cur u^n(z), P_z^n=\cur p^n(z)$, with $u^n$ the first component of the solutions of the magnetoquasistatic problem \eqref{eq:quasimortar} and $p^n$ the corresponding first component of the adjoint states from \eqref{eq:adjointTorqueED} for $n=0,...,N-1$. The only major change is the last term coming from the eddy currents with $u^n_z=u^n(z), p^n_z=p^n(z)$.
\begin{remark}
    For a more rigorous derivation of the topological derivative formula, we refer to \cite{Krenn_Spacetime}, where the authors considered a transient linear heat equation on moving domains with an application to a synchronous reluctance machine. They considered the equation in Eulerian coordinates, which lead to an additional convection term which we do not need to account for, since \eqref{eq:quasimortar} is stated in Lagrangian coordinates. The nonlinearity of the materials is independent of time or movement, and is incorporated straightforwardly. 
\end{remark}
\subsection{Modeling eddy-current losses}
We compute the eddy currents by finite differences
\begin{align}
    j_e^n=-\sigma_\Omega\frac{u^n-u^{n-1}}{\tau}, n=0,...,N-1.
\end{align}
In \cite{Hameyer_Eddy}, the authors point out the need to adapt the eddy currents to restore their physical property. Since they form a closed loop in axial direction, the integral over their 2d surface $\Omega_{m_1},\Omega_{m_2}$ has to be zero. We correct this by subtracting their spatial average
\begin{align}\label{eq:eddySpatialAverage}
    \widetilde{J^n_{m_i}}=\chi_{\Omega_{m_i}}\left(j_e^n-\overline{j_e^n}^{m_i}\right)=\chi_{\Omega_{m_i}}\left(j_e^n-\frac{1}{|\Omega_{m_i}|}\int_{\Omega_{m_i}}j_e^n\dx\right).
\end{align}
Finally, the temporally averaged loss density in $\Omega_{m_i}$ is given by $P^\mathrm{ed}_{m_i}=\tfrac{\sigma_m}{N}\sum_{n=0}^{N-1}\widetilde{J^n_{m_i}}^2$, where $\sigma_m$ is the conductivity of the permanent magnets. We combine this for all domains to
\begin{align}\label{eq:POmega}
    P^\mathrm{ed}_\Omega=\sum_{i\in\I}P_i^\mathrm{ed}\chi_{\Omega_i}
\end{align}
where $P_f^\mathrm{ed}=P_a^\mathrm{ed}=0$ and for the permanent magnets
\begin{align}\label{eq:eddylosses}
    P^\mathrm{ed}_{m_i}(\boldu)=\frac{\sigma_m}{N\tau^2}\sum_{n=0}^{N-1}\left(u^n-u^{n-1}-\frac{1}{|\Omega_{m_i}|}\int_{\Omega_{m_i}}u^n-u^{n-1}\dx\right)^2,\; i=1,2.
\end{align}
\begin{remark}
    If the magnets are segmented in circumferential direction, i.e., either $\Omega_{m_1}$ or $\Omega_{m_2}$ are not connected, one has to consider the average for each segment separately. Similarly, one needs to consider the full area if they are connected electrically. The recognition and handling of these components throughout the free-form design optimization introduce substantial additional difficulties, which are not addressed here. We perform our computations with the modeling assumption that the magnets consist of two distinct connected parts.
\end{remark}
\begin{remark}
    The overall average eddy-current loss power $\mathcal{P}_\mathrm{ed}(\Omega)$ can be computed by
    \begin{align}\label{eq:overallEddy}
        \mathcal{P}_\mathrm{ed}(\Omega)=\int_{D}P^\mathrm{ed}_\Omega(\boldu(\Omega))\dx,
    \end{align}
    where $\boldu(\Omega)$ is the solution of the magnetoquasistatic problem \eqref{eq:quasimortar} for the configuration $\Omega$.
\end{remark}
We compute the resulting temperature distribution $\vartheta$ by solving a static heat equation
\begin{align}\label{eq:heat}
\begin{aligned}
    -\mathrm{div}\lambda_\Omega\nabla\vartheta&=P_\mathrm{ed}(u^0,...,u^{N-1})&&\text{ in } D\\
    \lambda_\Omega\nabla\vartheta\cdot n&=\beta_{\mathrm{SH}}(\vartheta_0-\vartheta)&&\text{ on }\Gamma_\mathrm{SH}\\
    \lambda_\Omega\nabla\vartheta\cdot n&=\beta_{\mathrm{AG}}(\vartheta_0-\vartheta)&&\text{ on }\Gamma_\mathrm{R}.\\
\end{aligned}
\end{align}
The material coefficient $\lambda_\Omega=\sum_{i\in\I}\chi_{\Omega_i}\lambda_i$ is defined piecewise. The values for the thermal conductivities $\lambda_f,\lambda_a,\lambda_{m_1},\lambda_{m_2}$, the heat transfer coefficients $\beta_\mathrm{SH},\beta_\mathrm{AG}$ and the ambient temperature $\vartheta_0$ are given in Table \ref{tab:data}.
\subsection{Temperature constraint}
We want to constrain the temperature $\vartheta$ in each point in the magnets by a limit temperature $T^*$
\begin{align}\label{eq:constraintTempPW}
    \vartheta(x)\le T^*\text{ a.e. on }\Omega_{m_1}\cup\Omega_{m_2}.
\end{align}
We translate this constraint to a functional,
\begin{align}\label{eq:constraintTranslation}\begin{aligned}
    \vartheta(x)\le T^*&\Leftrightarrow\frac{\vartheta(x)}{T^*}\le1\Leftrightarrow\max\left\{\frac{\vartheta(x)}{T^*},1\right\}-1=0\text{ a.e. on }\Omega_{m_1}\cup\Omega_{m_2}\\&\Leftrightarrow\int_{\Omega_{m_1}\cup\Omega_{m_2}}\max\left\{\frac{\vartheta(x)}{T^*},1\right\}-1\dx=0,
\end{aligned}
\end{align} 
which is zero if and only if the constraint is fulfilled. In order to be able to compute derivatives, we square the maximum function and result at the quadratic penalty function used in \cite{AndradeNovotnyLaurain2024}:
\begin{align}\label{eq:Ctemp}
    \mathcal{C}_\mathrm{th}(\Omega)=\int_Dc_\Omega\Theta_{T^*}(\vartheta(\Omega))\dx,\quad\Theta_S(s)=\left(\max\left\{1,\frac{s}{S}\right\}-1\right)^2 \; \text{ for } S>0,
\end{align}
where $c_\Omega=\sum_{i\in\I}c_i\chi_{\Omega_i}, c_f=c_a=0, c_{m_1}=c_{m_2}=1$ and $(\boldu(\Omega),\boldl(\Omega),\vartheta(\Omega))\in \mH^n\times\mL^n\times H^1(D)$ is the solution of
\begin{align}\label{eq:magnetothermal}\begin{aligned}
   \begin{pmatrix}
        \int_{D\mathrm{all}}\sigma_\Omega\frac{u^n-u^{n-1}}{\tau}v\dx\\0
    \end{pmatrix}+e((u^n,\lambda^n),(v^n,\mu^n),\alpha^n)&=\begin{pmatrix}
        0\\0
    \end{pmatrix}\\
        \sigma_\Omega u^{-1}&=\sigma_\Omega u^{N-1}\\
        \int_D\lambda_\Omega\nabla\vartheta\cdot\nabla\varphi\dx+\int_{\Gamma_\mathrm{SH}}\beta_\mathrm{SH}(\vartheta_0-\vartheta)\varphi\ds+\int_{\Gamma_R}\beta_\mathrm{AG}(\vartheta_0-\vartheta)\varphi\ds&=\int_DP^\mathrm{ed}_\Omega(\boldu)\varphi\dx
\end{aligned}
\end{align}
for all test functions $(v^n,\mu^n)\in\mH\times \mL, \varphi\in H^1(D),n=0,...,N-1$, where the magnetostatic operator $e$ is given in \eqref{eq:mortar}.
\begin{remark}
    The unique solvability of the Joule heating problem combined with magnetoquasistatics \eqref{eq:magnetothermal} was investigated in \cite{Lasarzik_Joule}. Combining this with the considerations about the harmonic mortar approach \cite{Egger_Mortar}, we can conclude the existence of a unique solution of problem \eqref{eq:magnetothermal}.
\end{remark}
\begin{remark}
    The functional \eqref{eq:Ctemp} is zero, if and only if the pointwise constraint \eqref{eq:constraintTempPW} is zero almost everywhere. We include the constraint in our design optimization problem, by adding the functional \eqref{eq:Ctemp} to the main objective with a suitable weight. If the weight is too small, the functional may be positive leading to violations of the constraint. Conversely, a too high choice may result in numerical instabilities in the optimization. However, the effort of tuning this single parameter is acceptable.
\end{remark}
\subsection{Toplogical derivative}
The structure of the shape function \eqref{eq:Ctemp} suits the framework presented in \cite{Gangl_Automated}. It turns out that the topological derivative almost separates to the magnetic and the thermal part with a simple term accounting for the coupling via the eddy-current losses. We can therefore reuse the topological derivative of the eddy-current problem $f^{i\rightarrow j}_{\T_\mathrm{ed}}$ \eqref{eq:TDformulaED} and rely on \cite{Amstutz_Sensitivity} for the topological derivative of the stationary heat equation. It remains to derive the adjoint system. Therefore, we introduce the Lagrangian of the electromagnetic-thermal coupled system
\begin{align}\label{eq:LagrTemp}\begin{aligned}
    \mathcal{G}_{\C_\mathrm{th}}&(\Omega,(\boldu,\boldl,\vartheta),(\boldp,\bolde,\phi))=\int_Dc_\Omega\Theta_{T^*}(\varphi)\dx+\int_D\lambda_\Omega\nabla\vartheta\cdot\nabla\phi\dx-\int_DP^\mathrm{ed}_\Omega(\boldu)\phi\dx\\&-\int_{\Gamma_\mathrm{SH}}\beta_\mathrm{SH}(\vartheta_0-\vartheta)\phi\ds-\int_{\Gamma_R}\beta_\mathrm{AG}(\vartheta_0-\vartheta)\phi\ds+\sum_{n=0}^{N-1}\bigg(\int_{D_\mathrm{all}}\frac{\sigma_\Omega}{\tau}(u^n-{u^{n-1}})p^n\dx +e((u^n,\lambda^n),(p^n,\eta^n),\alpha^n)\bigg),
\end{aligned}
\end{align}
with $e$ as in \eqref{eq:mortar}. We obtain the adjoint equation by differentiating the Lagrangian \eqref{eq:LagrTemp} with respect to the states $(\boldu,\boldl,\vartheta)$. The adjoint states $(\boldp,\bolde,\phi)\in\mH^N\times\mL^N\times H^1(D)$ are the solution of
\begin{align}\label{eq:adjointTemp}
    \begin{aligned}
        \int_D\lambda_\Omega\nabla\phi\cdot\nabla\varphi+\int_{\Gamma_\mathrm{SH}}\beta_\mathrm{SH}\phi\varphi\ds+\int_{\Gamma_R}\beta_\mathrm{AG}\phi\varphi\ds&=-\int_D c_\Omega\Theta_{T^*}'(\vartheta)\varphi\dx\\
        \begin{pmatrix}
        \int_{D\mathrm{all}}\frac{\sigma_\Omega}{\tau}(p^n-p^{n+1})v\dx\\0
\end{pmatrix}+\nabla_{u^n,\lambda^n}e((u^n,\lambda^n),(p^n,\eta^n),\alpha^n)(v,\mu) &=\begin{pmatrix}
    \int_D\partial_{u^n}P^\mathrm{ed}_\Omega(\boldu)(v)\phi\dx\\0
\end{pmatrix}\\
        \sigma_\Omega p^N&=\sigma_\Omega p^0,
    \end{aligned}
\end{align}
for all test functions $(v^n,\mu^n,\varphi)\in\mH\times\mL\times H^1(D), n=0,...,N-1,$ with $\nabla_{u,\lambda}e$ as in \eqref{eq:Operatoradjoint}. The derivative of the eddy-current losses in the permanent magnets \eqref{eq:eddylosses} is given by
\begin{align}
    \begin{aligned}
        \partial_{u^n}P^\mathrm{ed}_{m_i}(\boldu)(v)=\frac{\sigma_m}{N\tau^2}\left(-u^{n+1}+2u^n-u^{n-1}-\overline{(-u^{n+1}+2u^n-u^{n-1})}^{m_i}\dx\right)(v-\overline{v}^{m_i}),
    \end{aligned}
\end{align}
where $\overline{v}^{m_i}$ is the spatial average of a function $v$ over $\Omega_{m_i}$, see \eqref{eq:eddySpatialAverage}. The derivative of the penalty function is given by
\begin{align}\label{eq:zetadev}
    \Theta_S'(s)=\frac{2}{S}\left(\max\left\{1,\frac{s}{S}\right\}-1\right).
\end{align}
We observe again the temporally inverted structure of the adjoint equation. In the forward problem \eqref{eq:magnetothermal}, we computed the temperature $\vartheta$ based on the magnetic states $\boldu$. Here, we have to solve first for the thermal adjoint state $\phi$ to compute the magnetic adjoint states $\boldp$.
The topological derivative of $\C_\mathrm{th}(\Omega)$ from material $i\in\I$ to $j\in\I\setminus\{i\}$ in $z\in\Omega_i$ is given by
\begin{align}\label{eq:TDformulaTemp}\begin{aligned}
    \id^{i\rightarrow j}\C_\mathrm{th}(\Omega)(z)&=f^{i\rightarrow j}_{\T_\mathrm{ed}}(\boldsymbol{U}_z,\boldsymbol{P}_z,\boldsymbol{U}_z,\boldsymbol{P}_z)+2\lambda_i\frac{\lambda_j-\lambda_i}{\lambda_j+\lambda_i}\nabla\vartheta_z\cdot\nabla\phi_z+(c_j-c_i)\Theta_{T^*}(\vartheta_z)-(P^\mathrm{ed}_j(\boldsymbol{U}_z)-P^\mathrm{ed}_i(\boldsymbol{U}_z))\phi_z,
\end{aligned}
\end{align}
where $U^n_z=\cur u^n(z), P^n_z=\cur p^n(z), u^n_z=u^n(z), p^n_z=p^n(z), \vartheta_z=\vartheta(z),\phi_z=\phi(z)$ are now the point evaluations of the direct $(\boldu,\vartheta)$ and adjoint states $(\boldp,\phi)$ solving equations \eqref{eq:magnetothermal} and \eqref{eq:adjointTemp}, respectively and $f^{i\rightarrow j}_{\T_\mathrm{ed}}$ is defined in \eqref{eq:TDformulaED}.
\begin{remark}
    One obtains \eqref{eq:TDformulaTemp} by applying the framework presented in \cite{Gangl_Automated} to \eqref{eq:Ctemp}. The first term relates to the magnetoquasistatic equation, the second term is the well-known topological derivative of a linear diffusion equation, see e.g. \cite{Amstutz_Sensitivity}, and the latter two are the variations of the thermal cost functional and the heat source. In the spirit of \cite{Gangl_Automated}, we mention that the exterior problem, like \eqref{eq:exterior}, for the asymptotic expansion of the state $(\boldu,\boldl,\vartheta)$ decouples in the magnetic and thermal domain, since the coupling term via the eddy-current losses depends on the states themselves and not on their gradients. In the limit $\epsilon\searrow 0$, this term vanishes.
\end{remark}
\section{Mechanical constraints} \label{sec_mechanics}
In the numerical results, we will see that the designs obtained by optimizing their electromagnetic-thermal properties lack of mechanical stability. Although we fixed a predefined iron ring $D_\mathrm{RI}$ throughout the optimizations, the occurring forces would lead to a local material failure. Since increasing the width of this ring reduces the magnetic performance, we propose to handle this issue by imposing additional constraints to our design optimization problem. We constrain the pointwise Von Mises stresses in the rotor iron similarly as in \cite{Holley_Diss}. For the sake of readability, we will use standard notation of elasticity in this section, which overlaps with the symbols used in electromagnetics in Sections \ref{sec:T} and \ref{sec:TCt}. We write $u$ for the mechanical deformation, which is now the state variable, and denote its adjoint state by $p$. Mechanical stresses are denoted by $\sigma$, and $\nu$ is the Poisson ratio. We will not mix formulas from the mechanical domain with the electromagnetic-thermal considerations from above.
\subsection{Linearized elasticity}
The mechanical behavior of our machine loaded by centrifugal forces can be described by the equations of 2d linearized elasticity 
\begin{align}
\begin{aligned}\label{eq:elasticity}
    -\mathrm{div}\,\sigma_\Omega(u)&=\rho_\Omega V_\mathrm{rot}^2x&&\text{ in }D\cup D_\mathrm{RI}\\
    \sigma_\Omega(u)n&=0&&\text{ on }\Gamma_R\\
    u&=0&&\text{ on }\Gamma_\mathrm{SH}\\
    R_{\frac{\pi}{4}}u|_{\Gamma_1}&=u|_{\Gamma_2},
\end{aligned}
\end{align}
where the latter condition realizes the symmetry condition of the deformation on the radial boundaries. Further, we have 
\begin{align}\label{eq:sigmaOmega}
    \sigma_\Omega(u)=2\mu_\Omega\varepsilon(u)+\lambda_\Omega\mathrm{tr}(\varepsilon(u))\mathrm{I}, \quad\varepsilon(u)=\tfrac{1}{2}(\nabla u+\nabla u^T)
\end{align}
with piecewise defined Lamé parameters for plane stress $\mu_\Omega=\frac{E_\Omega}{2(1+\nu)}, \lambda_\Omega=\frac{E_\Omega\nu}{1-\nu^2}, E_\Omega=\sum_{i\in\I}E_i\chi_{\Omega_i}, \nu_\Omega=\sum_{i\in\I}\nu_i\chi_{\Omega_i}$ and the mass density $\rho_\Omega=\sum_{i\in\I}\rho_i$. We restrict ourselves to a uniform Poisson ratio $\nu=\frac{1}{3}$, which is a realistic value and leads to simplifications in the formulae for the topological derivative. We refer to \cite{Amstutz_vonMises} for the more general case. In order to be able to solve \eqref{eq:elasticity} also for designs including air, we assign an artificial Young's modulus $E_a$ to $\Omega_a$ with a finite contrast to iron $E_f/E_a<\infty$. This so-called ersatz material approach is widely used in mechanical design optimization. With this choice of material parameters, we can write
\begin{align}
    \sigma_\Omega(u)=E_\Omega\sigma(u),\quad \sigma(u)= \frac{3}{4}\varepsilon(u)+\frac{3}{8}\mathrm{tr}(\varepsilon(u))\mathrm{I}.
\end{align}
\begin{remark}
    In the production process, the iron sheets are usually punched and the permanent magnets glued in afterwards. The mechanical modeling of this situation is very challenging and would require to solve a contact problem. Especially in design optimization with changing material interfaces, this is infeasible. Therefore, we model the permanent magnets with a Young's modulus similar as air $E_{m_1}=E_{m_2}=E_a$ but assign a realistic mass density $\rho_{m_1}=\rho_{m_2}=\rho_m$ according to Table \ref{tab:data}, as it was also done in e.g. \cite{Lee_MTPAMech}. 
\end{remark}
\subsection{Von Mises stress constraint}
Similarly as in \cite{Holley_Diss}, we introduce a constraint to the local Von Mises stresses. The Von Mises stress or yield stress is a scalar quantity which is often considerd as a material failure criterion. For simplicity, we consider always the squared Von Mises stress
\begin{align}\label{eq:vonMises}
    s_\Omega(u):=E_\Omega^2s(u),\quad s(u)=\frac{1}{2}\mathbb{B}\sigma(u):\sigma(u),\quad \mathbb{B}=3\mathbb{I}-\mathrm{I}\otimes\mathrm{I}.
\end{align}
We are interested in limiting the Von Mises stress by some $S^*$
\begin{align}\label{eq:consVMPW}
    s_\Omega(u)\le (S^*)^2\Leftrightarrow s(u)\le M^*\text{ a.e. in }\Omega_f\cap\tilde{D},
\end{align}
with $M^*=\frac{(S^*)^2}{E_f^2}$. As pointed out in \cite{Amstutz_vonMises}, the stresses next to the Dirichlet boundary $\Gamma_\mathrm{SH}$ will be artificially high. Therefore, we consider the stress constraint only inside $\tilde{D}\subset D\cup D_\mathrm{RI}$ which has a sufficiently large distance to $\Gamma_\mathrm{SH}$. Similarly as for the temperature constraint \eqref{eq:Ctemp}, we rewrite the constraint using a functional
\begin{align}\label{eq:consVM}
    \C_\mathrm{VM}(\Omega)=\int_{\tilde{D}}c_\Omega\Phi_{M^*}(s(u(\Omega)))\dx=0,\quad c_\Omega=\sum_{i\in\I}c_i\chi_{\Omega_i}
\end{align}
where $u(\Omega)$ is the solution of \eqref{eq:elasticity} for a given design $\Omega$, with weights $c_f=1,c_a=c_{m_1}=c_{m_2}=0$ and $\Phi_{M^*}$ is given by
\begin{align}\label{eq:Phi}
    \Phi_S(s)=\left(1+\left(\frac{s}{S}\right)^p\right)^\frac{1}{p}-1, \; S>0.
\end{align}
\begin{remark}
    Similar to the function $\Theta_S$ in \eqref{eq:Ctemp}, the function $\Phi_S$ is a regularization of the maximum function. For $\Theta_S$ we have the one-to-one correspondence $\Theta_S(s)=0\Leftrightarrow s\le S$. For $\Phi_S$, this is no longer true, since $\Phi_S(s)>0$ for all $s>0$. Nevertheless, as carried out in \cite{Amstutz_Constraint}, the penalty function of a gradient state constraint (like stresses) has to fulfill a growth condition to ensure the existence of the topological derivative. Therefore, we can not employ $\Theta_S$ and continue with $\Phi_S$.
\end{remark}
\begin{remark}
    In \cite{Amstutz_vonMises}, the authors identified the weights with the Young's modulus $c_\Omega=E_\Omega$ which leads to a penalization of stresses also outside the target domain since $E_a>0$. As remarked in \cite{Holley_Diss}, this has a regularizing effect on the design optimization problem, which is needed when minimizing the volume of a mechanical device subject to local stress constraints. However, since we are doing multi-physical optimization with additional quantities of interest, we can continue with our choice of $c_\Omega$.
\end{remark}
\subsection{Topological derivative of the Von Mises stress constraint}
 The topological derivative of $\C_\mathrm{VM}$ was introduced in \cite{Amstutz_vonMises}. We only have to insert the weight $c_\Omega$ accordingly. First, we introduce the adjoint state $p$ which is the solution of
 \begin{align*}
    -\mathrm{div}\,\sigma_\Omega(p)&=\chi_{\tilde{D}}\mathrm{div}\left(c_\Omega\Phi_{M^*}'(s(u))\tilde{\mathbb{B}}_\Omega\sigma(u)\right)&&\text{ in }D\cup D_\mathrm{RI}\\
    \sigma_\Omega(p)n&=-c_\Omega\Phi_{M^*}'(s(u))\tilde{\mathbb{B}}_\Omega\sigma(u)&&\text{ on }\Gamma_R\\
    p&=0&&\text{ on }\Gamma_\mathrm{SH}\\
    R_{\frac{\pi}{4}}p|_{\Gamma_1}&=p|_{\Gamma_2},
\end{align*}
with
\begin{align*}
    \Phi'_{S}(s)=\frac{s^{p-1}}{S^p}\left(1+\left(\frac{s}{S}\right)^p\right)^\frac{1-p}{p},\quad\tilde{\mathbb{B}}_\Omega\sigma=6\mu_\Omega\sigma+(\lambda_\Omega-2\mu_\Omega)\mathrm{tr}\sigma.
\end{align*}
For $z\in\Omega_i$, we use the abbreviations $S^u_z=\sigma(u(z)), \E^p_z=\varepsilon(p(z)), S^\mathrm{VM}_z=\tfrac{1}{2}\mathbb{B}S^u_z:S^u_z, P_z=p(z)$. The topological derivative from material $i\in\I$ to $j\in\I, i\neq j$ reads
\begin{align}
    \begin{aligned}
        \id^{i\rightarrow j}\C_\mathrm{VM}(\Omega)(z)&=3E_ir^{i\rightarrow j}S^u_z:\E^p_z-(\rho_j-\rho_i)V_\mathrm{rot}z\cdot p_z\\&+\chi_{\tilde{D}}\bigg(c_i\Psi_{M^*}^{i\rightarrow j}(S^u_z)+(r^{i\rightarrow j})^2c_i\Phi_{M^*}'\left(S^\mathrm{VM}_z\right)(S^\mathrm{VM}_z+S^u_z:S^u_z)\\&\phantom{aaaaaa}+c_j\Phi_{M^*}((1-2r^{i\rightarrow j})^2S^\mathrm{VM}_z)-c_i\Phi_{M^*}(S^\mathrm{VM}_z)+4r^{i\rightarrow j}c_i\Phi'_{M^*}(S^\mathrm{VM}_z)\bigg),
    \end{aligned}
\end{align}
where $r^{i\rightarrow j}=\frac{E_j-E_i}{2E_j+E_i}$ and $\Psi_{M^*}^{i\rightarrow j}(\sigma)=\Psi_{r^{i\rightarrow j}}(\sigma/\sqrt{M^*})$ with
\begin{align}
\begin{aligned}
    \Psi_r(\sigma)&=\frac{1}{\pi}\int_0^1\int_0^\pi\left(\Phi_1(\tfrac{1}{2}\mathbb{B}\sigma:\sigma+\Lambda_r(t,\varphi))-\Phi_1(\tfrac{1}{2}\mathbb{B}\sigma:\sigma)-\Phi_1'(\tfrac{1}{2}\mathbb{B}\sigma:\sigma)\Lambda_r(t,\varphi)\right)t^{-2}\,\mathrm{d}\varphi\,\mathrm{d}t,\\
    \Lambda_r(t,\varphi)&=\frac{rt}{2}(5(\sigma_1^2-\sigma_2^2)\cos(\varphi)+3(\sigma_1-\sigma_2)^2(2-3t)\cos(2\varphi))\\&+\frac{r^2t^2}{4}(3(\sigma_1+\sigma_2)^2+6(\sigma_1^2-\sigma_2^2)(2-3t)\cos(\varphi)+(\sigma_1-\sigma_2)^2(3(3t-2)^2+4\cos(\varphi)^2)),
\end{aligned}
\end{align}
where $\sigma_1,\sigma_2$ are the principal stresses of $\sigma$, i.e. the eigenvalues of $\sigma$. Since the Von Mises stresses are frame invariant, i.e. they are uniquely defined by the eigenvalues $\sigma_1,\sigma_2$, the function $\Psi_r(\sigma)$ depends only on three parameters $r,\sigma_1,\sigma_2$ and can be precomputed for selected sample values.
\begin{remark}
    We restricted ourselves to a fixed Poisson ratio $\nu=\tfrac{1}{3}.$ In \cite{Amstutz_vonMises}, the authors considered an arbitrary but fixed value for all materials. The derivation of the topological derivative of the Von Mises stress constraint considering a material dependent Poisson ratio $\nu_\Omega$ is also covered by the framework from \cite{Gangl_Automated}. However, the computation of the explicit formula is to our knowledge still open.
\end{remark}
\section{Numerical results} \label{sec_numerics}
\begin{table}
    \centering
    \begin{tabular}{c|c|c|c|c|c|c|c}
         $k_\mathrm{max}$&$\varepsilon$&$s_\mathrm{min}$&$s_\mathrm{max}$&$\gamma$&$\delta$&$\Delta_\ell$&$\varepsilon_\ell$\\
         \hline
         1000&$4^\circ$&0.05&1&0.5&1.5&$10^5$&$10^{-3}V^*$\\
    \end{tabular}
    \caption{Parameters used in the level set algorithm (Algorithm \ref{alg:Volume})}
    \label{tab:params}
\end{table}
We now present the application of the level set based topology optimization algorithm, Algorithm \ref{alg:Volume}, with parameters as presented in Table \ref{tab:params} to the different optimization problems introduced above. We perform the optimizations using the open source Finite Element package NGSolve \cite{ngsolve}, an implementation in C++ with a very flexible Python interface. For spatial discretization, we use lowest order Lagrangian basis functions on a triangular mesh with 3567 nodes. As proposed in \cite{Amstutz_Levelset}, we use the same continuous piecewise linear functions to represent both, the states and the level set function $\psi$. We do not adapt the mesh throughout the optimization and assign the cut ratio, i.e. the relative volume of the different materials, to the indicator functions $\chi_{\Omega_i}$ on elements which are cut by the level set function $\psi$. They are used to compute all piecewise defined quantities: electromagnetic material laws $h_\Omega,\sigma_\Omega$ \eqref{eq:material_laws}, mechanical material parameters $E_\Omega,\nu_\Omega,\rho_\Omega$ \eqref{eq:sigmaOmega}, eddy-current loss density $P_\Omega^\mathrm{ed}$ \eqref{eq:POmega}, weights $c_\Omega$ in \eqref{eq:Ctemp}, \eqref{eq:consVM} and topological derivative \eqref{eq:TDmulti} which corresponds to a linear interpolation weighted by the volume ratios. The sensitivities were smoothed by \eqref{eq:smoothing} with a parameter $\rho=10^{-6}$. We use $K=30$ harmonic basis functions for the Lagrange multiplier \eqref{eq:mortar}, \eqref{eq:quasimortar} and $N=11$ rotor positions \eqref{eq:angles} to solve the magnetic problems. This results in a problem with 39567 degrees of freedom for the all-in-one system \eqref{eq:K_allinone}, which is still suitable for direct methods. To solve the exterior problem \eqref{eq:exterior}, we truncate the unbounded domain by a centered ball of radius 128 and use lowest order Finite Elements on an adaptively refined triangular mesh with 61272 nodes. In an offline phase we evaluate \eqref{eq:TDevaluate} for $50$ radial (i.e. $t=|U|$) and 48 angular ($\beta=\angle U$) samples, which we interpolate by a cubic bivariate spline in order to evaluate the topological derivative in the online phase.

We start all optimization runs with the design shown in Figure \ref{fig:ini} and constrain the overall magnet volume to 10\% of the rotor, i.e. $V^*=0.1|D\cup D_\mathrm{RI}|$, which is the magnet volume of the initial design. We evaluate all designs by computing their average torque $\overline{\T_\mathrm{ed}}(\Omega)$ \eqref{eq:torque} and maximal temperature $\vartheta_\mathrm{max}(\Omega):=\max_{x\in \Omega_{m_1}\cup\Omega_{m_2}}\vartheta(x)$ based on the solution of \eqref{eq:magnetothermal} and the maximal Von Mises stress $s_\mathrm{max}(\Omega):=\max_{x\in\Omega_f}s(u(x))$ \eqref{eq:vonMises}. The results are collected in Table \ref{tab:results}. In the figures we show the final designs (left) and the corresponding eddy-current loss densities scaled to $50\,\mathrm{MW}/\mathrm{m}^2$ (mid) and Von Mises stress distributions scaled to $500\,\mathrm{MPa}$ (right).
\begin{figure}
    \centering
    \includegraphics[width=0.54\textwidth,trim=336 42 336 42,clip]{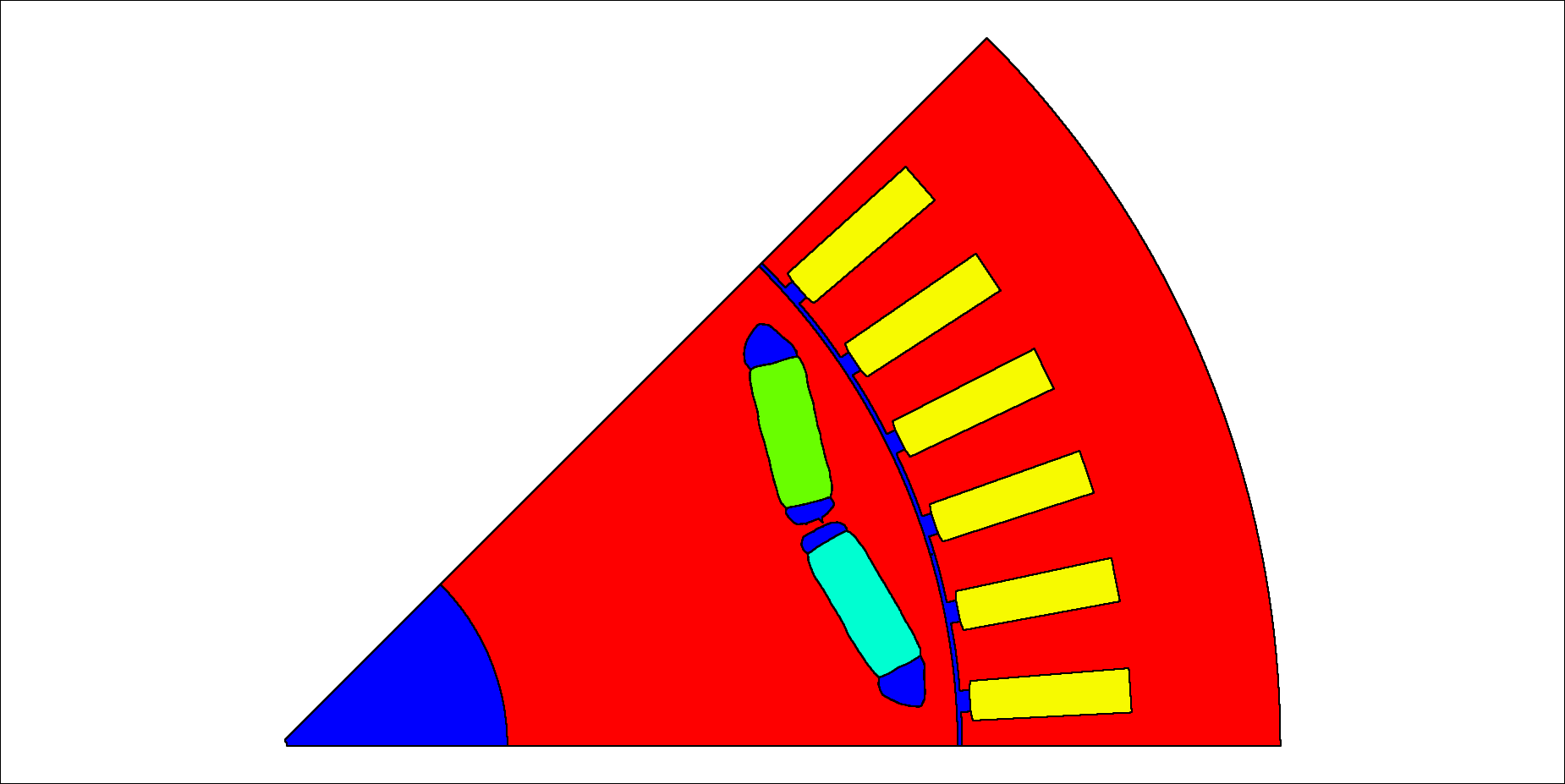}\quad
    \includegraphics[width=0.18\textwidth,trim=880 42 700 300,clip]{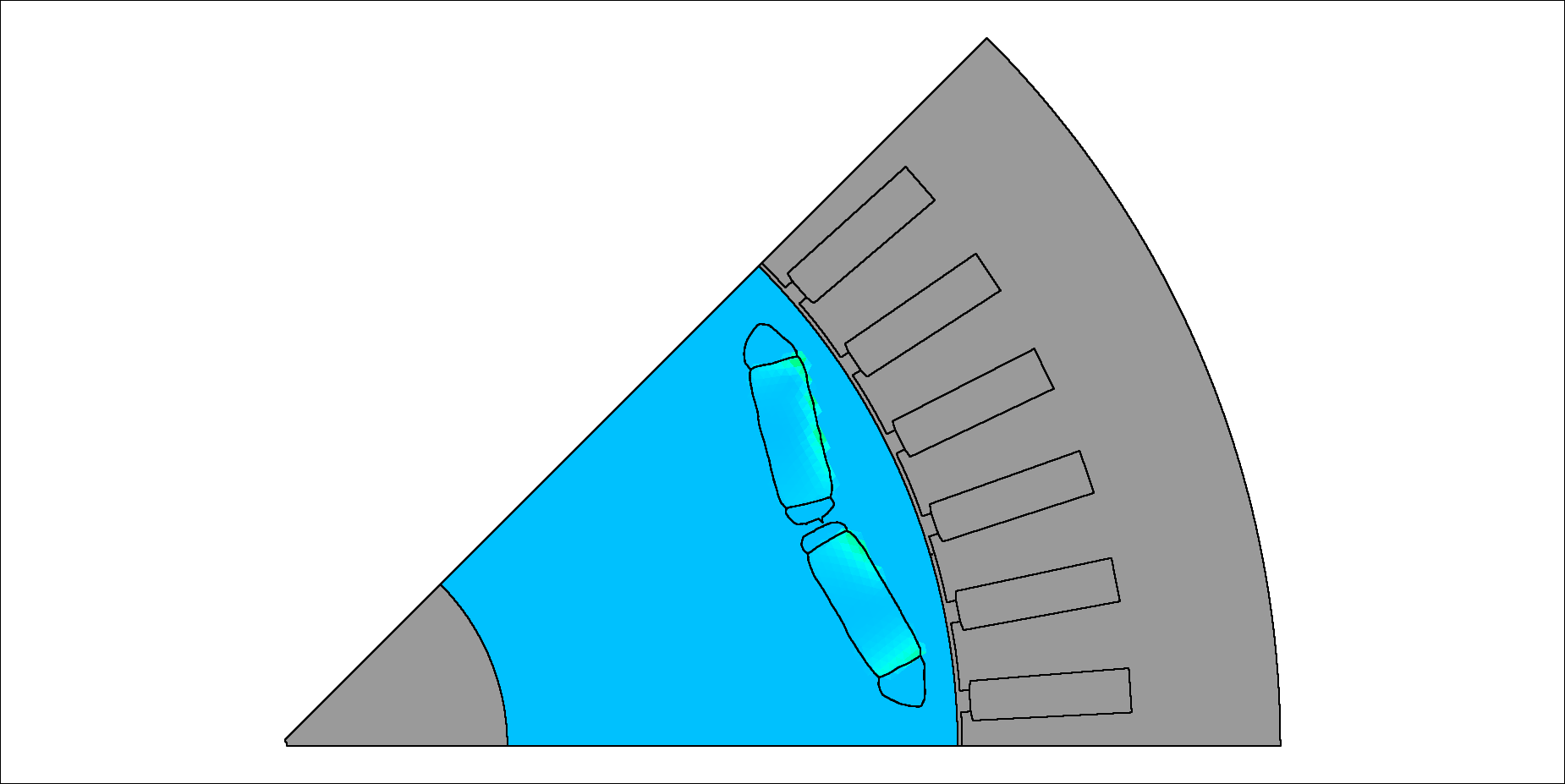}\quad
    \includegraphics[width=0.18\textwidth,trim=880 42 700 300,clip]{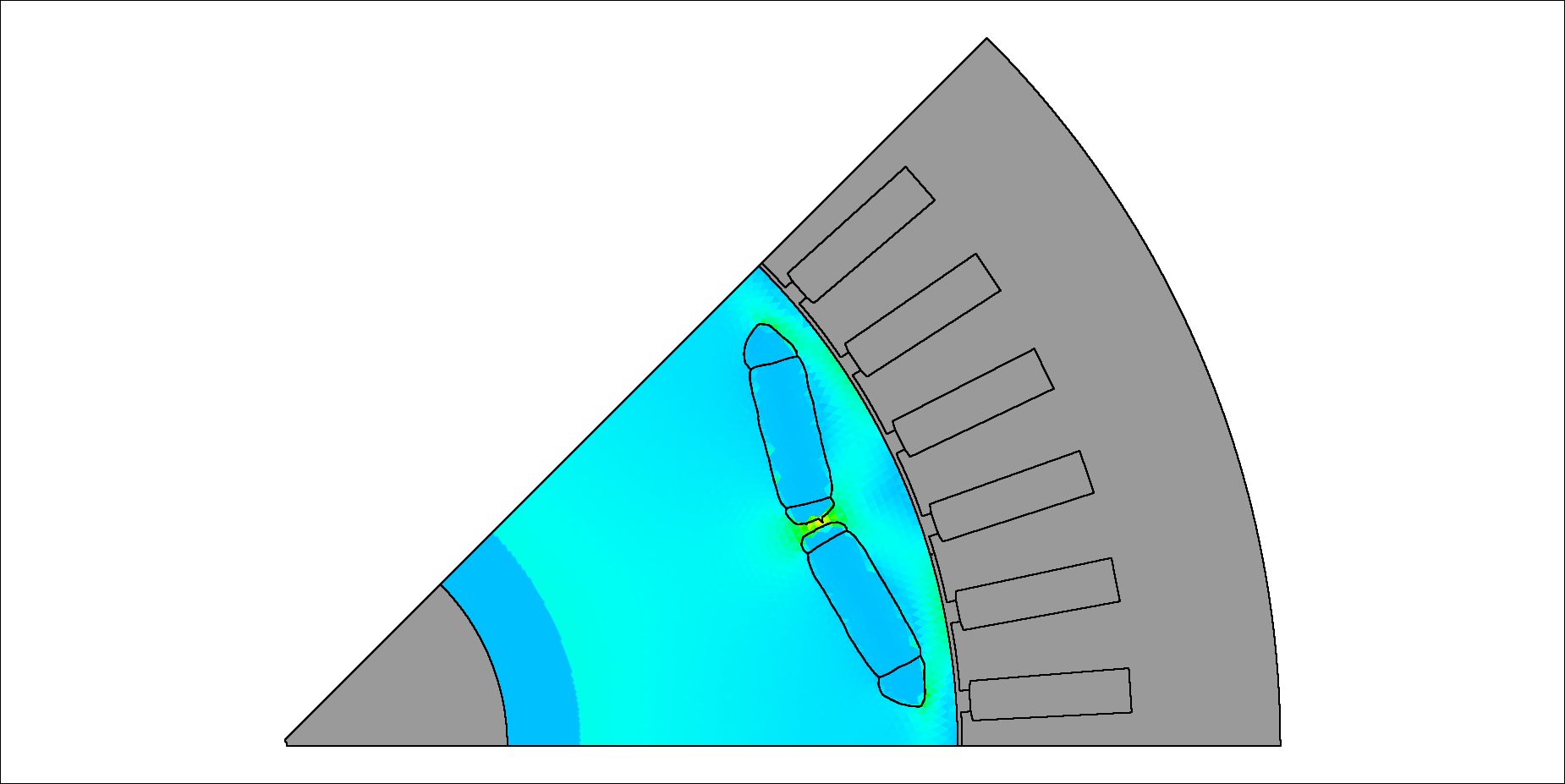}
    \caption{Initial design $\Omega_\mathrm{ini}$ (left), average loss density (mid), Von Mises stress distribution (right). Average torque $\overline{\T_\mathrm{ed}}(\Omega_\mathrm{ini})=653\mathrm{\,Nm}$, maximal temperature $\vartheta_\mathrm{max}(\Omega_\mathrm{ini})=72^\circ\mathrm{C}$, maximal Von Mises stress $s_\mathrm{max}(\Omega_\mathrm{ini})=340\,\mathrm{MPa}$.}
    \label{fig:ini}
\end{figure}
\subsection{Torque maximization}
First, we optimized the cost functional \eqref{eq:CostTorque}, i.e. maximized the average torque neglecting eddy currents. After 9 iterations of Algorithm \ref{alg:Volume} this led to the design $\Omega_\T$, shown in Figure \ref{fig:T}, with an average torque of $\overline{\T_\mathrm{ed}}(\Omega_\T)=858$\,Nm and a maximal temperature of $198^\circ$C. The design $\Omega_{\T_\mathrm{ed}}$ obtained by maximizing the torque considering the magnetoquasistatic equation \eqref{eq:CosttorqueED} is very close to the previous design $\Omega_\T$ and is thus not shown. Its torque is very similar $\overline{\T_\mathrm{ed}}(\Omega_{\T_\mathrm{ed}})=856$\,Nm, see Table \ref{tab:results} and we observe a slight decrease of the maximal temperature to $184^\circ$C due to the fact, that the eddy currents were considered in the simulations. Looking on the energy balance of the system, it is beneficial for the design to reduce the eddy-current losses to have most electrical power converted to mechanical power, i.e. the torque.
\begin{figure}
    \centering
    \includegraphics[width=0.54\textwidth,trim=336 42 336 42,clip]{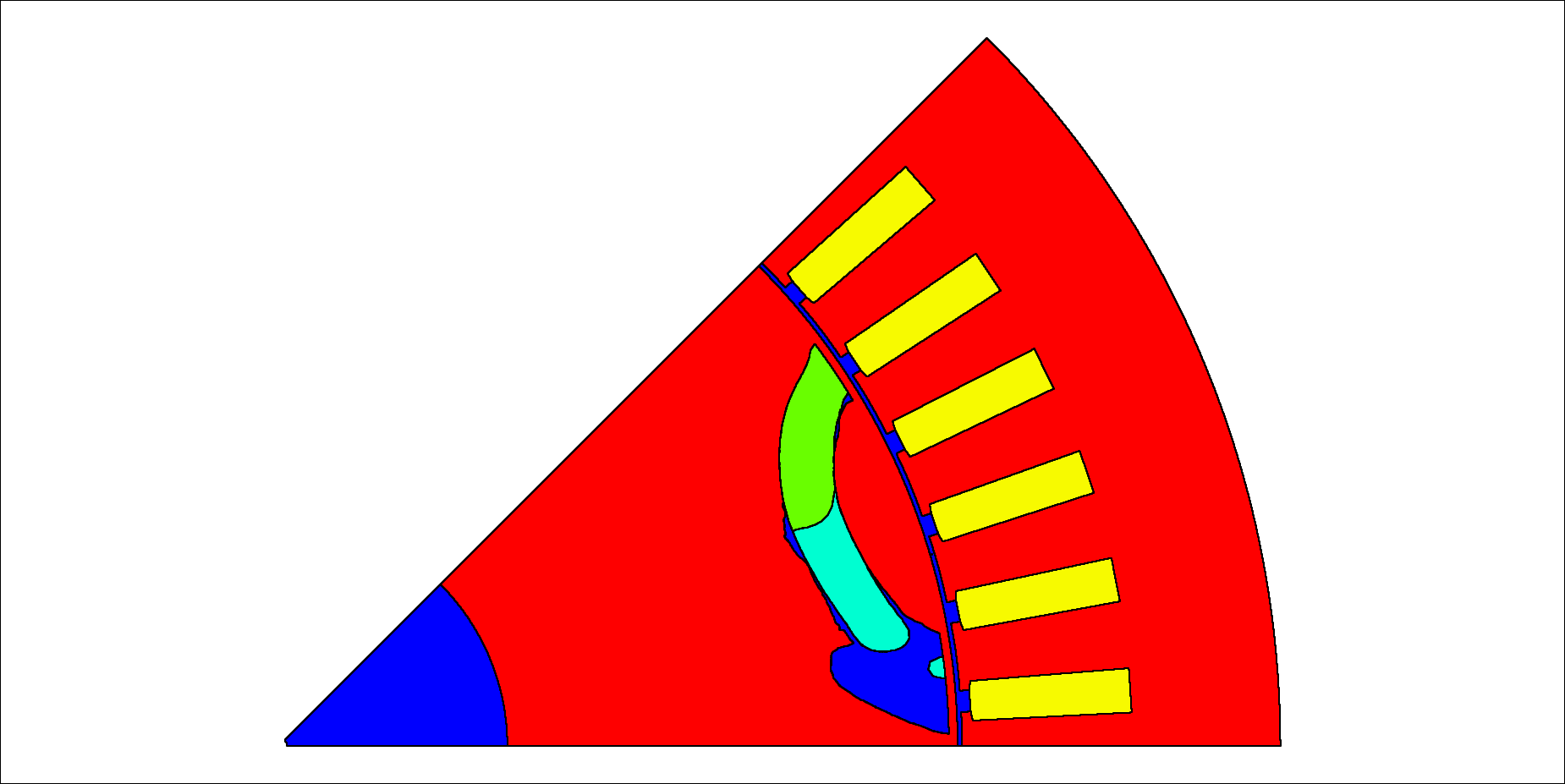}\quad
    \includegraphics[width=0.18\textwidth,trim=880 42 700 300,clip]{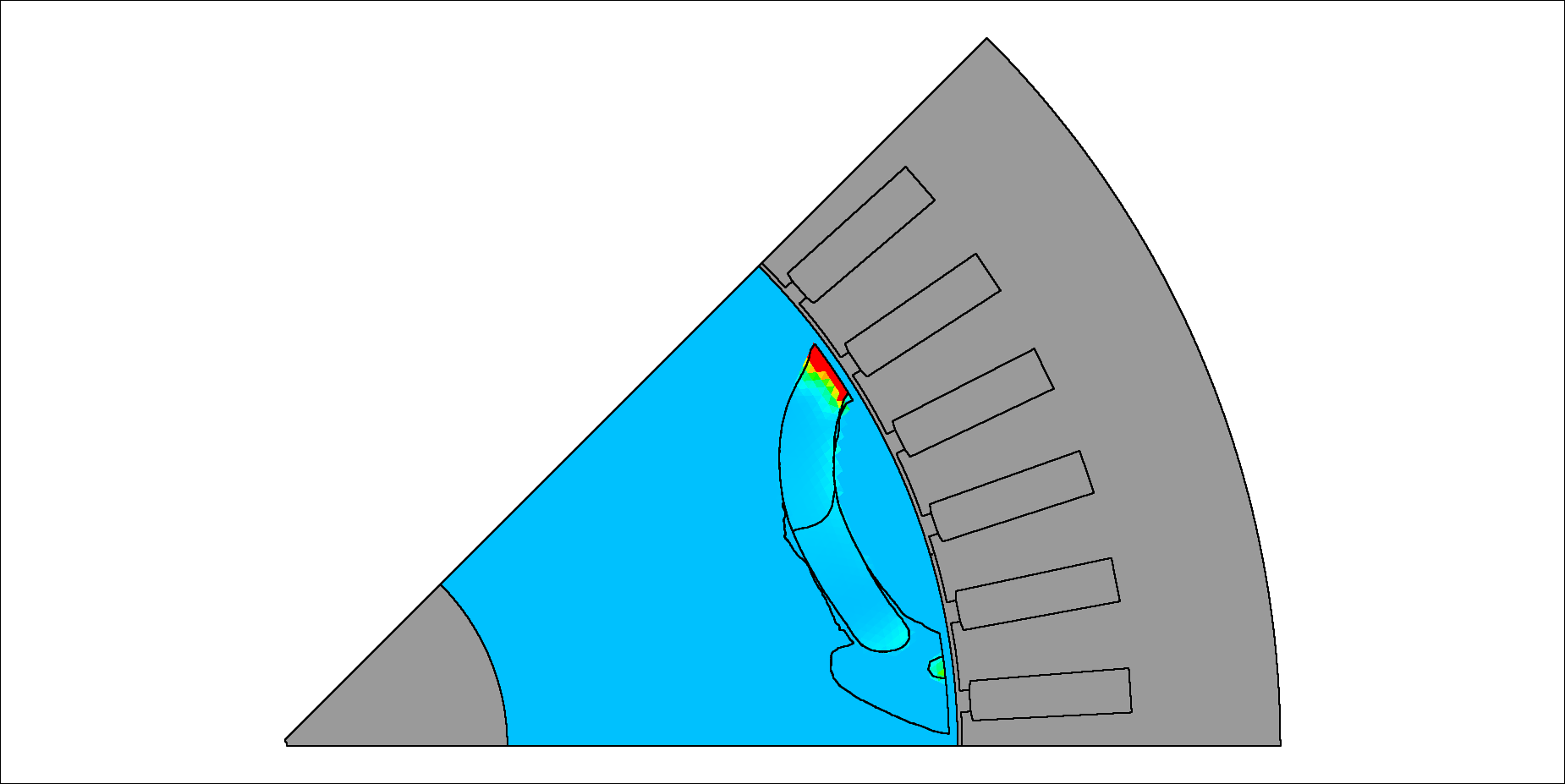}\quad
    \includegraphics[width=0.18\textwidth,trim=880 42 700 300,clip]{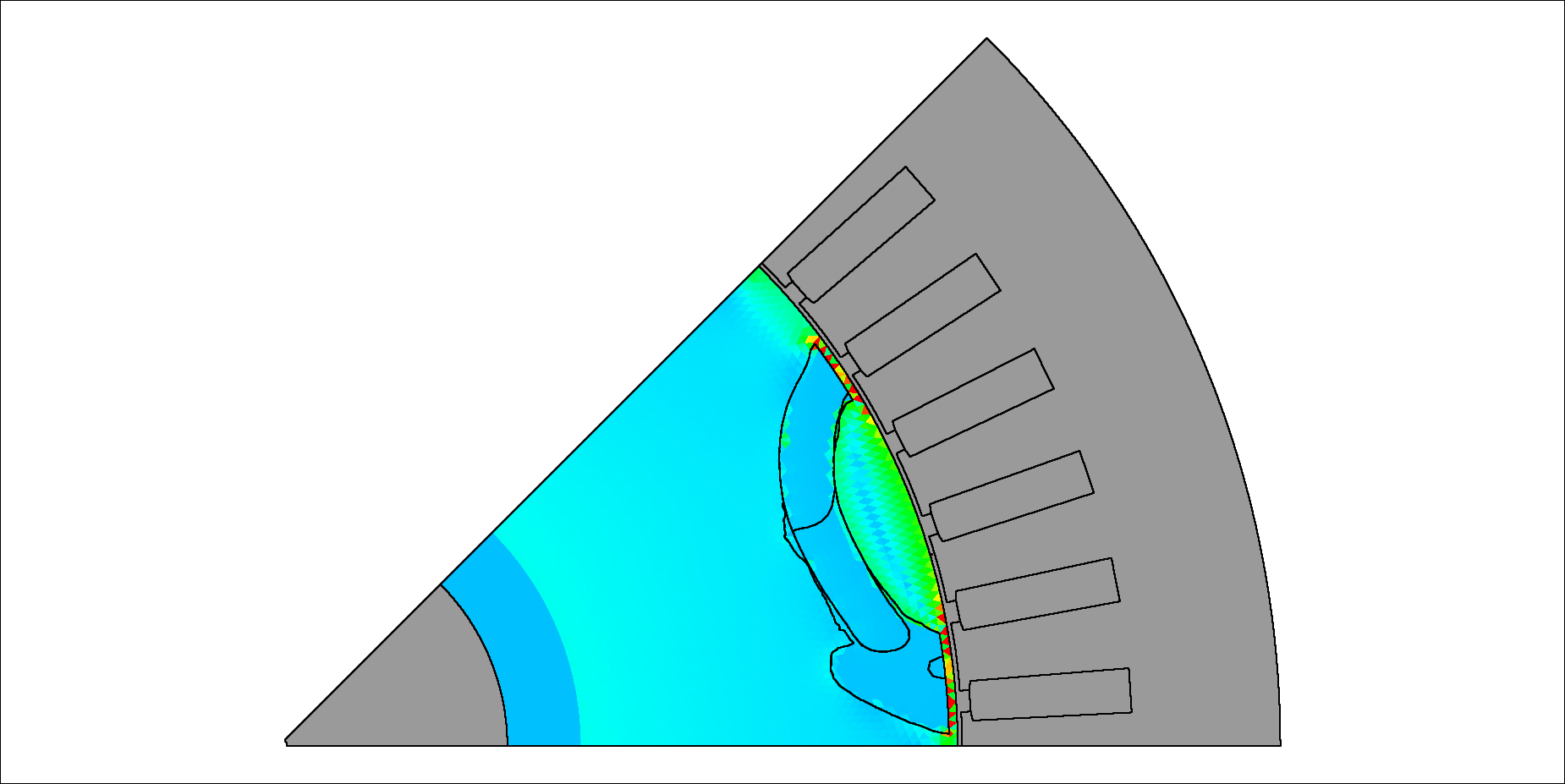}
\caption{Final design $\Omega_\T$ of maximizing average torque subject to magnetostatics \eqref{eq:CostTorque} (left), average loss density (mid), Von Mises stress distribution (right). Average torque $\overline{\T_\mathrm{ed}}(\Omega_\T)=858\,\mathrm{\,Nm}$, maximal temperature $\vartheta_\mathrm{max}(\Omega_\T)=198^\circ\mathrm{C}$, maximal Von Mises stress $s_\mathrm{max}(\Omega_\T)=772\,\mathrm{MPa}$.}    \label{fig:T}
\end{figure}
\subsection{Temperature constraint}
Next, we added the temperature constraint \eqref{eq:Ctemp} with a limiting temperature $T^*=90^\circ\mathrm{C}$. This is realized by optimizing the functional $\J(\Omega)=\overline{\T_\mathrm{ed}}(\Omega)+10^6\C_t(\Omega)$. Algorithm \ref{alg:Volume} converged after 58 iterations yielding the design presented in Figure \ref{fig:TcT}. The permanent magnets avoid regions close to the air gap, where the eddy-current loss density was very high in the unconstrained optimization, see Figure \ref{fig:T} (mid). This comes with a reduction of the average torque to $\overline{\T_\mathrm{ed}}(\Omega_{\T_\mathrm{ed},\C_t})=841\mathrm{\,Nm}$, which is $-2.0\%$ compared to the unconstrained design $\Omega_\T$. The corresponding maximal temperature $\vartheta_\mathrm{max}(\Omega_{\T_\mathrm{ed},\C_t})=92^\circ\mathrm{C}$ slightly violates the constraint. As mentioned in \cite{AndradeNovotnyLaurain2024}, this is due to using a finite weight, in our case $10^6<\infty$, to incorporate the constraint.
\begin{figure}
    \centering
    \includegraphics[width=0.54\textwidth,trim=336 42 336 42,clip]{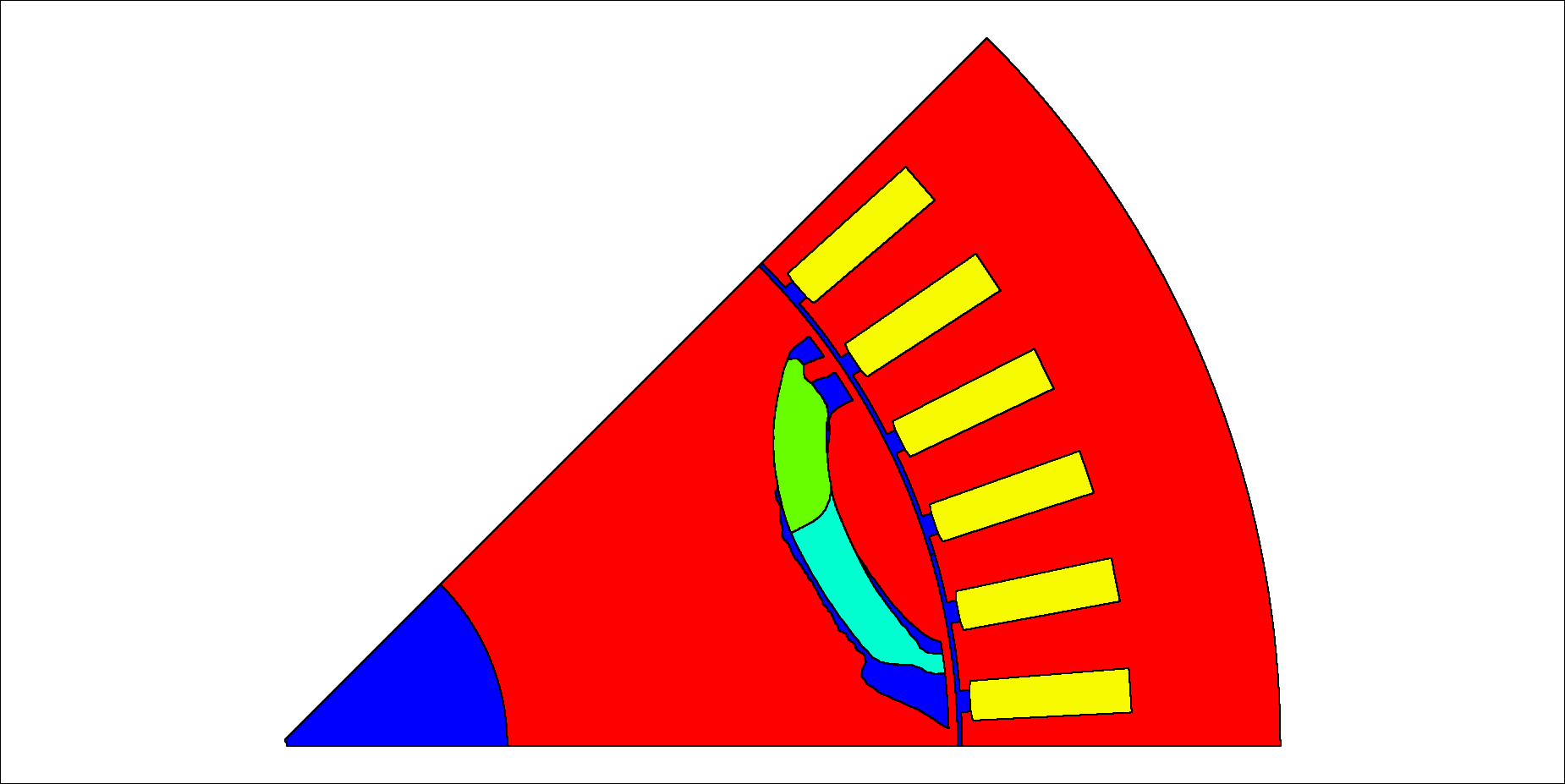}\quad
    \includegraphics[width=0.18\textwidth,trim=880 42 700 300,clip]{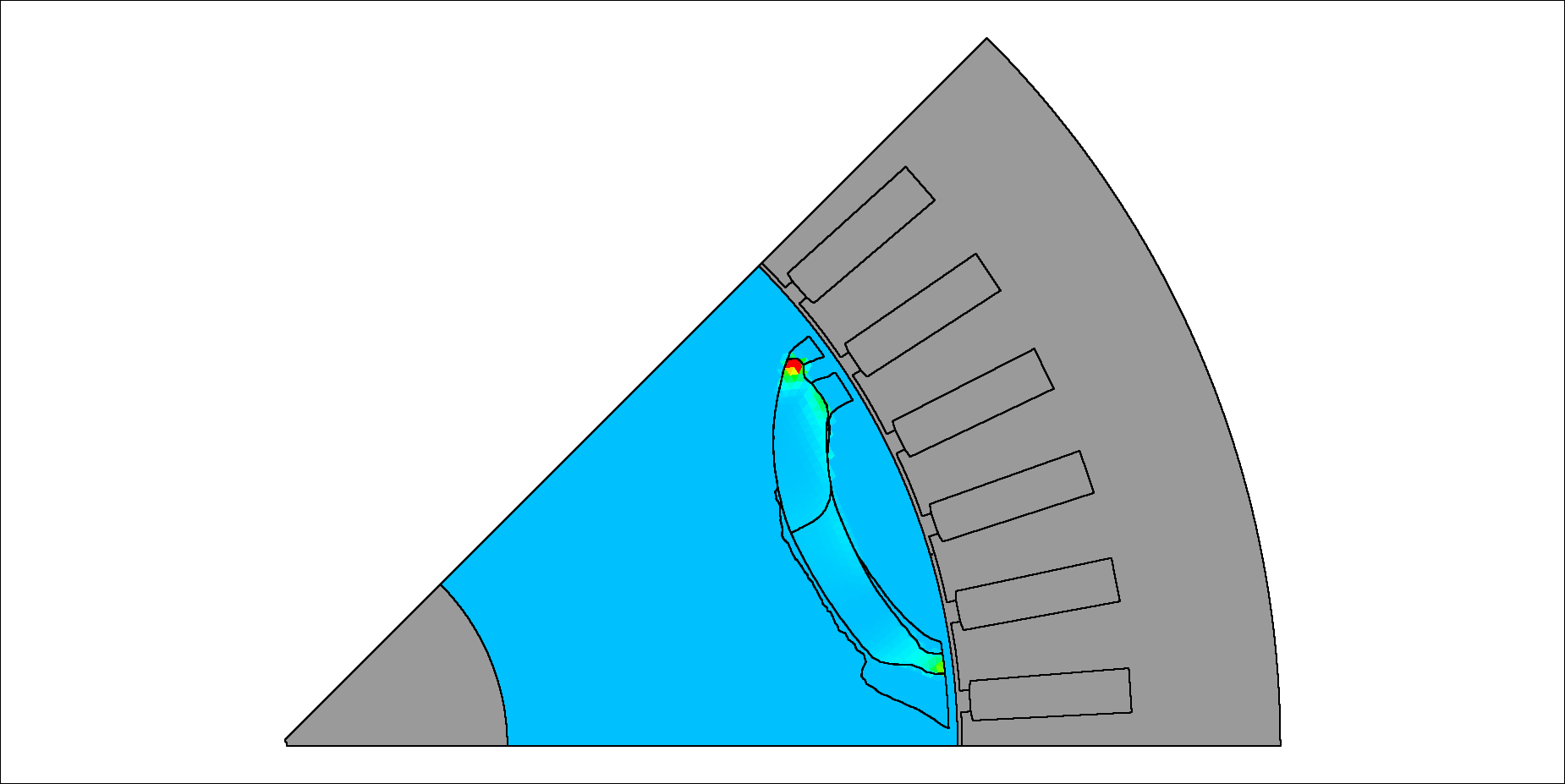}\quad
    \includegraphics[width=0.18\textwidth,trim=880 42 700 300,clip]{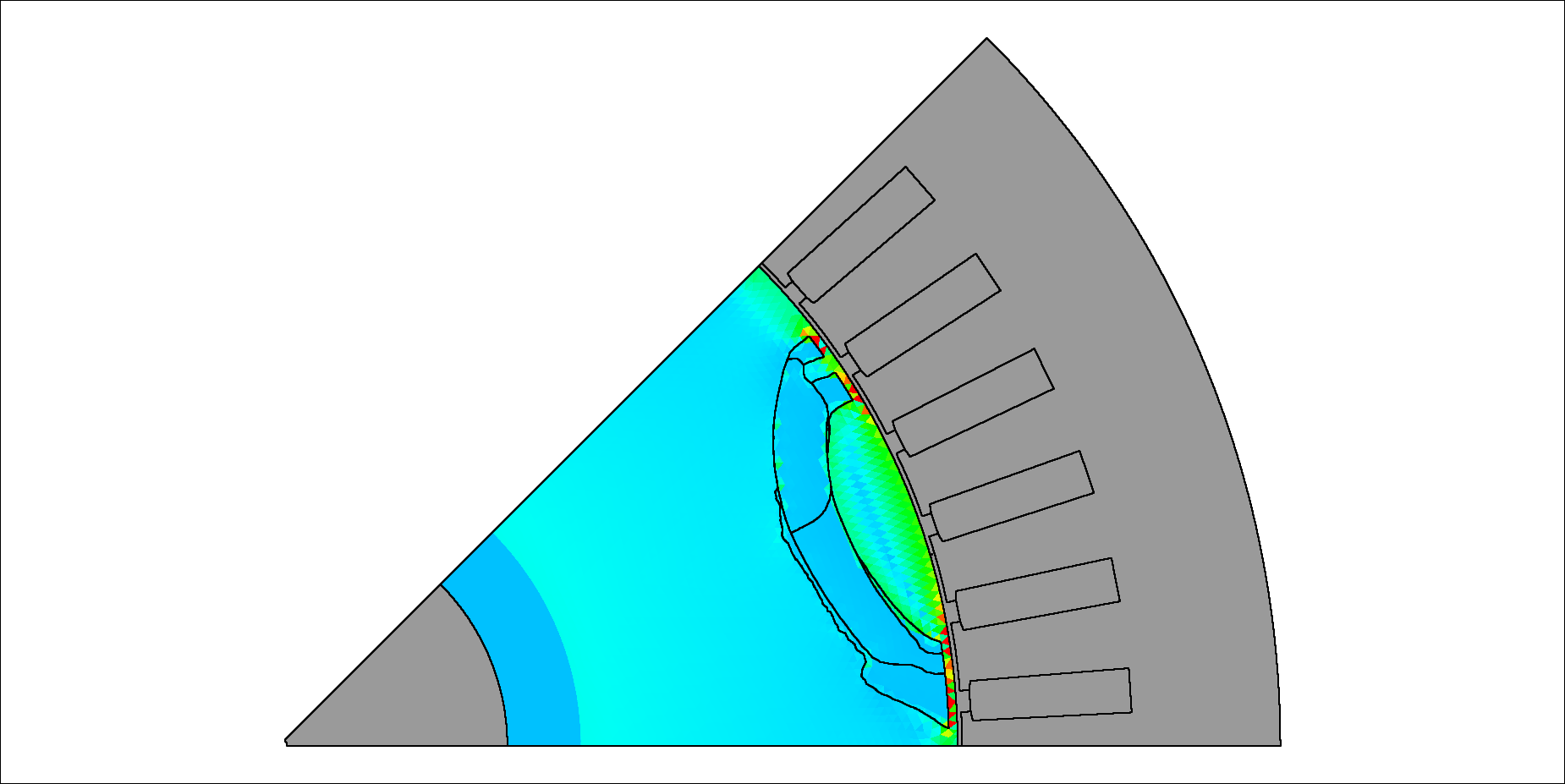}
\caption{Final design $\Omega_{\T_\mathrm{ed},\C_t}$ of maximizing average torque subject to magnetoquasistatics \eqref{eq:CosttorqueED} with temperature constraint \eqref{eq:Ctemp} (left), average loss density (mid), Von Mises stress distribution (right). Average torque $\overline{\T_\mathrm{ed}}(\Omega_{\T_\mathrm{ed},\C_t})=841\mathrm{\,Nm}$, maximal temperature $\vartheta_\mathrm{max}(\Omega_{\T_\mathrm{ed},\C_t})=92^\circ\mathrm{C}$, maximal Von Mises stress $s_\mathrm{max}(\Omega_{\T_\mathrm{ed},\C_t})=792\,\mathrm{MPa}$.}    \label{fig:TcT}
\end{figure}
\subsection{Temperature and stress constraints}
In the right plot of Figures \ref{fig:T} and \ref{fig:TcT} the Von Mises stress distribution is displayed. In the iron ring, the stresses exceed the yield stress $S^*=500\mathrm{MPa}$ which would lead to mechanical failure. We additionally consider the pointwise stress constraint \eqref{eq:consVMPW} by minimizing the functional $\J(\Omega)=\overline{\T_\mathrm{ed}}(\Omega)+10^6\C_t(\Omega)+3\cdot10^7\C_\mathrm{VM}(\Omega)$ with functionals as defined in \eqref{eq:CosttorqueED}, \eqref{eq:Ctemp}, \eqref{eq:consVM}, respectively. We obtained design $\Omega_{\T_\mathrm{ed},\C_t,\C_\mathrm{VM}}$, shown in Figure \ref{fig:all} (right) after 179 iterations of Algorithm \ref{alg:Volume}. Similarly as for $\Omega_{\T_\mathrm{ed},\C_t}$ we have a negligible violation of the temperature limit $T^*=90^\circ\mathrm{C}$ by $\vartheta_\mathrm{max}(\Omega_{\T_\mathrm{ed},\C_t,\C_\mathrm{VM}})=91^\circ\mathrm{C}$. The maximal Von Mises stress is reduced from almost $800\,\mathrm{MPa}$, for the previously optimized designs, to $s_\mathrm{max}(\Omega_{\T_\mathrm{ed},\C_t,\C_\mathrm{VM}})=464\,\mathrm{MPa}<S^*$. The overrating of the constrained can be explained by using a finite $p=16<\infty$ to regularize the maximum function in \eqref{eq:Phi}. Table \ref{tab:results} shows a comparison between the quantities of interest (average torque, maximum temperature, maximum Von Mises stresses) for all optimized designs.
\begin{figure}
    \centering
    \includegraphics[width=0.54\textwidth,trim=336 42 336 42,clip]{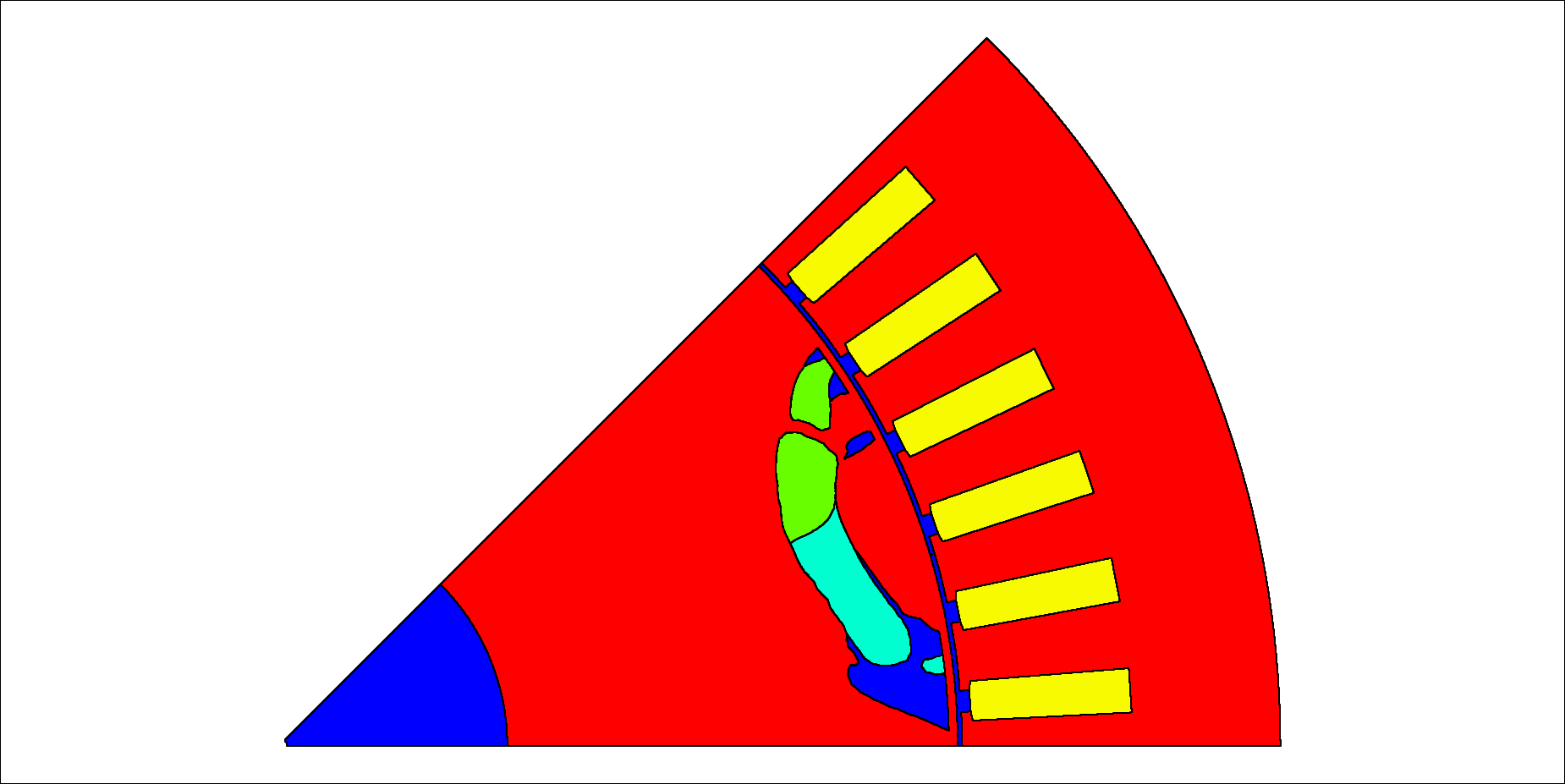}\quad
    \includegraphics[width=0.18\textwidth,trim=880 42 700 300,clip]{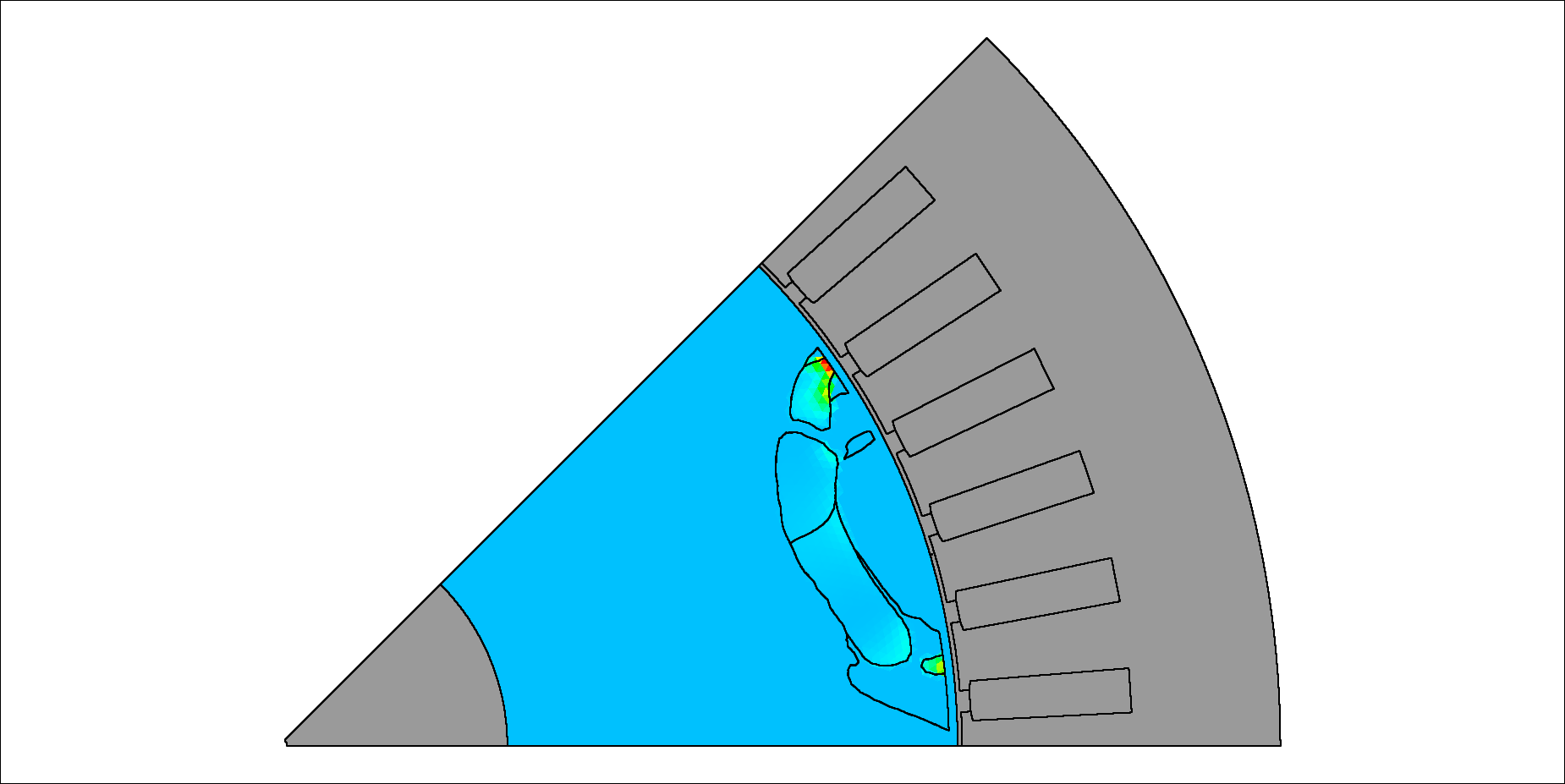}\quad
    \includegraphics[width=0.18\textwidth,trim=880 42 700 300,clip]{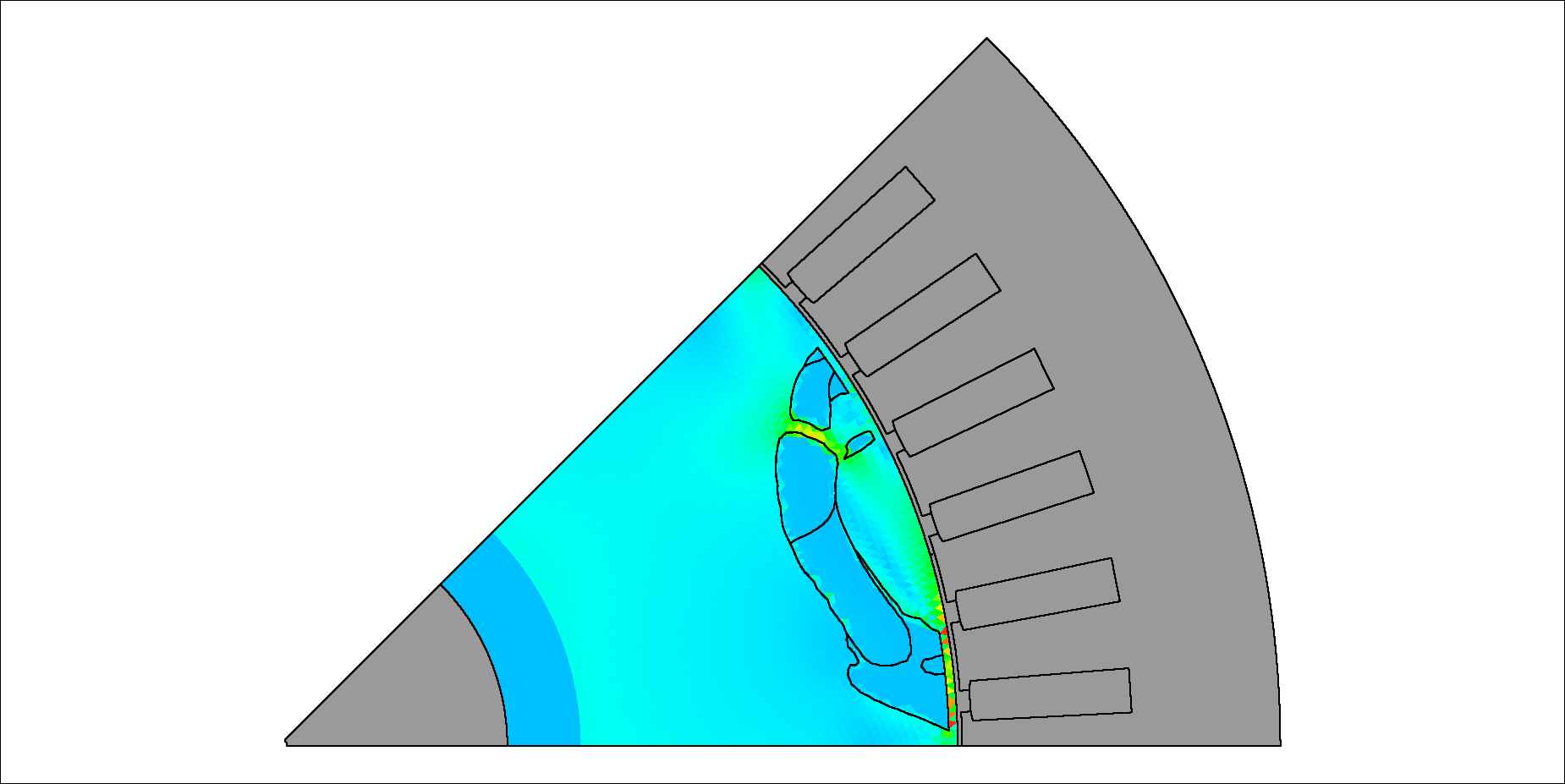}
\caption{Final design $\Omega_{\T_\mathrm{ed},\C_t,\C_\mathrm{VM}}$ of maximizing average torque subject to magnetoquasistatics \eqref{eq:CosttorqueED} with temperature constraint \eqref{eq:Ctemp} and stress constraint \eqref{eq:consVM} (left), average loss density (mid), Von Mises stress distribution (right). Average torque $\overline{\T_\mathrm{ed}}(\Omega_{\T_\mathrm{ed},\C_t,\C_\mathrm{VM}})=825\mathrm{\,Nm}$, maximal temperature $\vartheta_\mathrm{max}(\Omega_{\T_\mathrm{ed},\C_t,\C_\mathrm{VM}})=91^\circ\mathrm{C}$, maximal Von Mises stress $s_\mathrm{max}(\Omega_{\T_\mathrm{ed},\C_t,\C_\mathrm{VM}})=464\,\mathrm{MPa}$.}    \label{fig:all}
\end{figure}

\include{name}
\begin{table}
    \centering
    \begin{tabular}{l|c|c|c|c}
         $\Omega$&iter&$\overline{T}_\mathrm{ed}(\Omega)$&$\vartheta_\mathrm{max}(\Omega)$&$s_\mathrm{max}(\Omega)$  \\\hline
         $\Omega_\mathrm{ini}$&-&653&72&340\\
         $\Omega_\T$&9&858&198&772\\
         $\Omega_{\T_\mathrm{ed}}$&9&856&184&785\\
         $\Omega_{\T_\mathrm{ed},\C_t}$&58&841&92&792\\
         $\Omega_{\T_\mathrm{ed},\C_t,\C_m}$&179&825&91&464\\
    \end{tabular}
    \caption{Evaluation of obtained designs. Iterations needed in Algorithm \ref{alg:Volume} to converge. Average torque considering magnetoquasistatics $\overline{\T_\mathrm{ed}}$, see \eqref{eq:CosttorqueED}. Maximal  temperature $\vartheta_\mathrm{max}$ based on \eqref{eq:heat} and maximal Von Mises stresses $s_\mathrm{max}$ according to \eqref{eq:vonMises}.}
    \label{tab:results}
\end{table}

\section{Conclusion and Outlook}
In this work, we applied the framework of topological derivatives to an electromagnetic-thermal coupled problem to optimize the performance an electric machine accounting for local temperature constraints. We developed a model to simulate the thermal behavior due to the eddy-current losses in the permanent magnets. Using a smooth penalty function, we were able to incorporate the constraint in our optimization. We want to mention that this required the tuning of only one additional parameter. The optimization was carried out using a multi-material level set algorithm, which is very flexible to incorporate even more materials. We presented a very efficient method to incorporate volume constraints in the context of design optimization problems yielding fast convergence. Although we considered constraints on volume, temperature and stresses, our algorithm was capable of converging after 179 iterations. The numerical results prove the effectiveness of our method and show the potential of free-form optimization in engineering applications. 

There are several possible extensions to think of. In our setting, we considered a one-way coupling, i.e., the temperature depended on the electromagnetic fields but not vice versa. One could model a temperature-dependent material behavior of the permanent magnet as done for parameter optimization in \cite{Babcock2024}. This could lead to a natural limitation of the maximal temperature by simply maximizing the average torque with respect to this two-way coupled problem allowing for even more design freedom. Furthermore, one could also include the averaging of the eddy currents directly in the equation, as proposed in \cite{Hameyer_Eddy}. The framework of considering local state constraints can be extended to other quantities, possibly including other physics, e.g., to constrain demagnetization \cite{Krenn_Demag}. To preserve manufacturability, we did not change the iron ring at the rotor surface and incorporated local stress constraints. Moreover, one could combine these stress constraints with connectivity constraints, allowing to extend the design domain to the air gap.

\paragraph*{Acknowledgments}
The work of the authors is partially supported by the joint DFG/FWF Collaborative Research Centre CREATOR (DFG: Project-ID 492661287/TRR 361; FWF: 10.55776/F90) at TU Darmstadt, TU Graz, JKU Linz and RICAM Linz. P.G. is partially supported by the State of Upper Austria.

\bibliographystyle{abbrv} 
\bibliography{TBmerged}

\end{document}